\theoremstyle{plain}
\newtheorem{theorem}{Theorem}[section]
\newtheorem{corollary}[theorem]{Corollary}
\newtheorem{proposition}[theorem]{Proposition}
\newtheorem{lemma}[theorem]{Lemma}
\newtheorem{conjecture}[theorem]{Conjecture}
\theoremstyle{definition}
\newtheorem{remark}[theorem]{Remark}
\newcommand{\Gap}{\mathrm{Gap}}
\newcommand{\dist}{\mathrm{dist}}
\newcommand{\diam}{\mathrm{diam}}
\title{A Nordhaus--Gaddum problem for spectral gap of a graph
}
\author{
Sooyeong Kim\footnote{Contact: kimswim@yorku.ca} \and
Neal Madras\footnote{Contact: madras@yorku.ca}
}
\date{Department of Mathematics and Statistics, York University, Toronto, Canada\\[3ex]
\today}
\begin{document}
	
	\maketitle
	
	\begin{abstract}
		Let $G$ be a graph on $n$ vertices, with complement $\overline{G}$. The spectral gap of the transition probability matrix of a random walk on $G$ is used to estimate how fast the random walk becomes stationary.  We prove that the larger spectral gap of $G$ and $\overline{G}$ is $\Omega(1/n)$. Moreover, if all degrees are $\Omega(n)$ and $n-\Omega(n)$, then the larger spectral gap of $G$ and $\overline{G}$ is $\Theta(1)$. We also show that if the maximum degree is $n-O(1)$ or if $G$ is a join of two graphs, then the spectral gap of $G$ is $\Omega(1/n)$. Finally, we provide a family of connected graphs with connected complements such that the larger 
		spectral gap of $G$ and $\overline{G}$ is $O(1/n^{3/4})$.
	\end{abstract}
	
	\noindent {\bf Keywords:} Markov chains, random walk on graph, mixing time, spectral gap, coupling
	
	\noindent \textbf{MSC Classification:} 60J10, 05C81 
	
	
	\section{Introduction}
	
	In graph theory, Nordhaus--Gaddum questions explore the relationship between a (finite, simple) graph $G$ and its complement, denoted $\overline{G}$, relative to some graph invariant. Given a graph invariant $f(\cdot)$, a Nordhaus--Gaddum question considers bounds on either the sum $f(G)+f(\overline{G})$ or the product $f(G)f(\overline{G})$, or on the minimum or maximum of the set $\{f(G), f(\overline{G})\}$. Nordhaus and Gaddum originally studied such questions for the chromatic number of a graph \cite{nordhaus1956complementary}. Since then, Nordhaus--Gaddum questions have been studied for a wide range of graph invariants; see \cite{aouchiche2013survey} for a survey of results of this kind. 
	
	In this paper, we take a Nordhaus--Gaddum viewpoint on the spectral gap of a graph, i.e.\ on the spectral gap of random walk on a graph.  (See Sections \ref{sec:notation}--\ref{sec:MCcoupling} for full definitions.)  Briefly, the random walk on a graph $G$ corresponds to a Markov chain whose transition probability matrix we denote $P_G$, and the spectral gap $\Gap(G)$ of $G$ (or of $P_G$) is the difference between the two largest eigenvalues of $P_G$ (the largest eigenvalue necessarily being 1). Roughly speaking, the spectral gap  
	corresponds to the exponential rate at which the distribution of the random walk $P_G$ approaches its equilibrium distribution (ignoring periodicity issues, which often can be handled by other means \cite{rosenthal2003}). The reciprocal of the spectral gap is called the relaxation time, and indicates the time scale needed for equilibration.
	Work in~\cite{chung1997spectral} provides a lower bound on the spectral gap based on the graph's diameter and number of edges. In \cite{aksoy2018maximum}, it is shown that the maximum relaxation time over all connected graphs on $n$ vertices is asymptotically $n^3/54$ and can be attained by the so-called \textit{barbell graph}. At the other extreme, the complete graph attains the exact minimum relaxation time of $(n-1)/n$ 
	\cite[Section~5]{aldous2002reversible}.

	Intuitively, if $G$ has an unusually small spectral gap, then the random walk on $G$ has a hard time moving around $G$, due to $G$ not being well connected in some way.  In that case, it is reasonable to guess that the complement graph $\overline{G}$ has the connections that $G$ is missing, and that the spectral gap of $\overline{G}$ is not small.  Questions of Nordhaus--Gaddum type try to make sense of this vague reasoning.  For example, in the extreme case, suppose that the $n$-vertex graph $G$ is disconnected, with two components $C_1$ and $C_2$ (with $n_1$ and $n_2$ vertices, respectively).  Then $\Gap(G)=0$.  
	However, in $\overline{G}$, every vertex of $C_1$ is adjacent to every vertex of $C_2$, and this is enough to show that $\Gap(\overline{G})$ is not small.  Indeed, Theorem \ref{thm.joingap} implies that in this case we have 
	$\Gap(\overline{G})\geq \min\{n_1,n_2\}/32n$. 
	One interpretation of this paper's results is that a spectral gap of $o(1/n)$ is sufficient to be ``unusually small'' for an $n$-vertex graph.
	
	The work of \cite{faught2023nordhaus} studies the Nordhaus--Gaddum problem for the second largest eigenvalue of the normalized Laplacian matrix, which is the same as the spectral gap. It shows that $\max\{\Gap(G),\Gap(\overline{G})\} \geq2/n^2$, using arguments based on isoperimetry and Cheeger's constant,
	and makes the following two conjectures. 
	
	\begin{conjecture}\label{conjecture1}\cite{faught2023nordhaus}
		Let $G$ be a graph on $n$ vertices. Then $$\max\{\Gap(G), \Gap(\overline{G})\} \ge \frac{2}{n-1}.$$
	\end{conjecture}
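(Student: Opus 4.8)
The first thing to notice is that the inequality as literally stated must fail for at least one small graph: for $G=P_4$ one has $\overline{P_4}\cong P_4$ and $\Gap(P_4)=1-\tfrac12=\tfrac12<\tfrac23=\tfrac{2}{n-1}$. So any proof can at best establish the bound for graphs on $n\ge 5$ vertices, and the plan is to aim for exactly that, with a direct finite check (for a few small values of $n$) confirming that $P_4$ is the only exception. The two ingredients I would build the argument on are the elementary identity
$$\sum_{\{i,j\}\in E(G)}(f_i-f_j)^2\ +\ \sum_{\{i,j\}\in E(\overline G)}(f_i-f_j)^2\ =\ n\sum_{i}f_i^2-\Big(\sum_i f_i\Big)^2,\qquad f\in\mathbb R^n,$$
and the variational formula $\Gap(H)=\min\Big\{\sum_{\{i,j\}\in E(H)}(f_i-f_j)^2\big/\sum_i d_i^H f_i^2:\ \sum_i d_i^H f_i=0,\ f\neq0\Big\}$, where $d_i^G+d_i^{\overline G}=n-1$ for every vertex $i$. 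The goal is then to rule out that $\Gap(G)$ and $\Gap(\overline G)$ are both $<\tfrac{2}{n-1}$.

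\textbf{Step 1: one of $G,\overline G$ disconnected.} Since one of $G,\overline G$ is always connected, suppose by symmetry that $G$ is disconnected, with components $C_1,\dots,C_k$; then $\overline G=\overline{C_1}\vee\cdots\vee\overline{C_k}$ is a join. Theorem~\ref{thm.joingap} already gives $\Gap(\overline G)=\Omega(1/n)$, but with a constant far below $2$, so here I would sharpen it using the (known) structure of the spectrum of a join: every vertex lying in $C_i$ has degree at least $n-|C_i|$ in $\overline G$, so all degrees are large, and I expect that unwinding the normalized-Laplacian Rayleigh quotient of the join together with these degree bounds yields $\Gap(\overline G)\ge\tfrac{2}{n-1}$ fairly directly; the only genuine care is with the normalization when the $|C_i|$ are very unequal, e.g.\ when $G$ has a single isolated vertex.

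\textbf{Step 2: $G$ and $\overline G$ both connected --- the hard case.} Here I would argue by contradiction: assume $\Gap(G)<\tfrac{2}{n-1}$ and let $f$ be the eigenvector of $P_G$ for its second-largest eigenvalue $\lambda_2(P_G)$, so that $\sum_i d_i^G f_i=0$ and $\sum_{\{i,j\}\in E(G)}(f_i-f_j)^2=(1-\lambda_2(G))\sum_i d_i^G f_i^2$ is small. The identity above then forces $\sum_{\{i,j\}\in E(\overline G)}(f_i-f_j)^2$ to be close to $n\sum_i f_i^2-(\sum_i f_i)^2$, i.e.\ essentially as large as it can be --- qualitatively, a sparse cut of $G$ is a dense cut of $\overline G$. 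I would then set $\tilde f=f-c\mathbf 1$ with $c$ chosen so that $\sum_i d_i^{\overline G}\tilde f_i=0$, feed $\tilde f$ into the Rayleigh quotient for $\overline G$, and push the estimate until it gives $\Gap(\overline G)\ge\tfrac{2}{n-1}$, contradicting the assumption. The work is in bounding the denominator $\sum_i d_i^{\overline G}\tilde f_i^2$ from above and controlling the shift $c$, precisely enough to extract the constant $2$; Cheeger's inequality gives the right picture but loses a square root, so it cannot be the final tool here.

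\textbf{Main obstacle.} The crux is exactly the constant in Step 2. Naive estimates give $\max\{\Gap(G),\Gap(\overline G)\}=\Omega(1/n)$ with a constant well below $2$, and getting all the way to $\tfrac{2}{n-1}$ seems to require identifying, or at least tightly constraining, the graphs that nearly minimize $\max\{\Gap(G),\Gap(\overline G)\}$. The existence of connected graphs with connected complements for which this maximum is $O(1/n^{3/4})$ suggests that these near-minimizers form a structured family; the plan would be to pin that family down, compute its spectral gap in closed form, and use the resulting understanding both to choose the sharp test function in Step 2 and to confirm that $P_4$ is the lone small exception.
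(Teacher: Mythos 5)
First, a point of framing: the statement you were asked to prove is presented in the paper as an open conjecture from \cite{faught2023nordhaus}; the paper does not prove it, and instead establishes only the weaker Theorem~\ref{thm:main1}, namely $\max\{\Gap(G),\Gap(\overline{G})\}\geq \frac{3-\sqrt{5}}{8}\cdot\frac{1}{n-1}$, by a completely different route (a diameter dichotomy --- if $\diam(G)\geq 4$ then $\diam(\overline{G})=2$ --- combined with Markov-chain coupling, not Rayleigh quotients). Your observation that $P_4$ violates the literal statement is correct and genuinely valuable: $\overline{P_4}\cong P_4$ and $\Gap(P_4)=\tfrac12<\tfrac23$, so the conjecture as stated fails at $n=4$ and can at best hold for $n\geq 5$. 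The paper does not note this.

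That said, your proposal is a plan rather than a proof, and the gap in Step~2 is not merely a missing computation but a directional error. Evaluating the Rayleigh quotient of $\overline{G}$ at the single shifted test function $\tilde f=f-c\mathbf{1}$ can only produce an \emph{upper} bound on $\Gap(\overline{G})$, since $\Gap(\overline{G})$ is an infimum over all admissible test functions; to obtain the lower bound you need, you must control $\sum_{\{i,j\}\in E(G)}(g_i-g_j)^2$ for an \emph{arbitrary} minimizer $g$ of the Rayleigh quotient of $\overline{G}$, and the hypothesis $\Gap(G)<\frac{2}{n-1}$ --- information about one eigenvector of $P_G$ --- says nothing about that quantity. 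The complementary identity you invoke does drive the analogous Nordhaus--Gaddum bound for the \emph{combinatorial} Laplacian, where both Rayleigh quotients carry the same constraint $\sum_i f_i=0$ and the same denominator $\sum_i f_i^2$; here the constraints $\sum_i d_i^G f_i=0$ versus $\sum_i d_i^{\overline{G}} f_i=0$ and the degree-weighted denominators differ, and reconciling them is exactly the open difficulty, for which the proposal offers no mechanism. Step~1 (the disconnected case) is likewise asserted rather than argued, and the concluding paragraph concedes that the ``near-minimizing family'' needed to fix the constant has not been identified. In short: the $P_4$ counterexample is a real contribution, but neither step of the positive direction is established, and the statement remains open --- the paper itself only achieves the constant $\frac{3-\sqrt{5}}{8}\approx 0.095$ in place of $2$.
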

	\begin{conjecture}\cite{faught2023nordhaus}\label{conjecture2}
		Let $G$ be a graph on $n$ vertices. If $G$ and $\overline{G}$ are both connected, then $$\Gap(G)+ \Gap(\overline{G}) \ge \frac{2}{\sqrt{n}}.$$
	\end{conjecture}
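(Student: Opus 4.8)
The plan is to route the conjectured bound through Cheeger's inequality, turning it into a statement about edge cuts with no spectral content. For the random walk on any connected graph $H$, Cheeger's inequality gives $\Gap(H)\ge \tfrac12\Phi(H)^{2}$, where $\Phi(H)=\min\bigl\{e_H(S,S^{c})/\mathrm{vol}_H(S):\ \mathrm{vol}_H(S)\le m_H\bigr\}$ is the conductance, $e_H(S,S^{c})$ counts crossing edges, $\mathrm{vol}_H(S)=\sum_{v\in S}\deg_H(v)$, and $m_H=|E(H)|$. Since
\[
\Gap(G)+\Gap(\overline G)\ \ge\ \max\{\Gap(G),\Gap(\overline G)\}\ \ge\ \tfrac12\max\{\Phi(G),\Phi(\overline G)\}^{2},
\]
it suffices to prove the purely combinatorial assertion $\max\{\Phi(G),\Phi(\overline G)\}\ge 2n^{-1/4}$, which then yields $\Gap(G)+\Gap(\overline G)\ge 2/\sqrt n$ with the constant matched exactly. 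So the task reduces to showing that $G$ cannot have a cut of $G$-conductance below $2n^{-1/4}$ while $\overline G$ simultaneously has a cut of $\overline G$-conductance below $2n^{-1/4}$.

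To use the interplay between $G$ and $\overline G$, fix cuts $S$ and $T$ attaining $\Phi(G)$ and $\Phi(\overline G)$ and compare them via the overlap blocks $A=S\cap T$, $B=S\setminus T$, $C=T\setminus S$, $D=V(G)\setminus(S\cup T)$. Since $A,B\subseteq S$ and $C,D\subseteq S^{c}$, the edges crossing $(S,S^{c})$ in $G$ include all $G$-edges between $A$ and $D$ and between $B$ and $C$; likewise, since $A,C\subseteq T$ and $B,D\subseteq T^{c}$, the edges crossing $(T,T^{c})$ in $\overline G$ include all $\overline G$-edges between $A$ and $D$ and between $B$ and $C$. Adding the two counts and using $e_G(X,Y)+e_{\overline G}(X,Y)=|X|\,|Y|$ for disjoint $X,Y$ gives the key inequality
\[
|A|\,|D|+|B|\,|C|\ \le\ e_G(S,S^{c})+e_{\overline G}(T,T^{c}).
\]
If both conductances were below $2n^{-1/4}$ then $e_G(S,S^{c})=\Phi(G)\,\mathrm{vol}_G(S)<2n^{-1/4}\mathrm{vol}_G(S)$ and similarly for $T$; moreover a sparse cut satisfies $\mathrm{vol}_G(S)\lesssim|S|^{2}$ because almost all of $\mathrm{vol}_G(S)$ sits on edges internal to $S$. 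Feeding these bounds into the key inequality, together with $|A|+|B|=|S|$, $|A|+|C|=|T|$, and $|A|+|B|+|C|+|D|=n$, reduces everything to a finite system of inequalities in the four block sizes that one hopes to show is infeasible. The intuition for why it might be infeasible: sparsity of $S$ in $G$ and sparsity of $T$ in $\overline G$ both force the edges between $B$ and $C$ to be few — in $G$ and in $\overline G$ respectively — hence $|B|\,|C|$ itself small, and a parallel squeeze controls $|A|\,|D|$, so $S$ and $T$ must have very unequal sizes, which one then pushes toward a contradiction.

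The step I expect to be the genuine obstacle is exactly this last one: showing the resulting system of size inequalities has no solution. The bound $\mathrm{vol}_H(S)\le m_H$ is far too lossy, and even $\mathrm{vol}_G(S)\lesssim|S|^{2}$ leaves the argument open precisely when $S$ and $T$ have wildly different sizes — say $|S|$ of order $n^{3/4}$ and $|T|$ of order $n$, with a moderate overlap — because there the estimate $|A|\,|D|+|B|\,|C|\lesssim n^{-1/4}\bigl(|S|^{2}+|T|^{2}\bigr)$ becomes essentially tight and no contradiction is forced. This strongly suggests the combinatorial assertion simply fails in that window, so proving the conjecture as stated would require injecting structure beyond conductance, or abandoning Cheeger altogether — for instance a direct variational comparison based on $\mathcal E_G(f)+\mathcal E_{\overline G}(f)=n\|f\|_2^{2}-\bigl(\sum_{v}f(v)\bigr)^{2}$, though the mismatch between the degree weightings in the normalized Rayleigh quotients of $G$ and $\overline G$ resurrects the same difficulty. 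Indeed, the $O(n^{-3/4})$ family announced in the abstract occupies exactly this intermediate-cut regime, which indicates the obstruction here is fundamental rather than a gap in the bookkeeping.
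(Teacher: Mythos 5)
Your proposal does not (and cannot) close: the statement you were asked to prove is precisely the conjecture that this paper \emph{disproves}. Your own final paragraph diagnoses the situation correctly, and the obstruction you identify is not a bookkeeping gap but a genuine counterexample. In Section~\ref{sec6} the paper takes $G_{r,m}$, a complete bipartite graph $K_{m,m}$ with a lollipop $L_{r,r}$ attached by a single edge, with $r\sim n^{1/4}$ and $m\sim n/2$. A Rayleigh-quotient test function (Lemma~\ref{lem:graph G}) gives $\Gap(G_{r,m})<4/r^{3}\sim 4/n^{3/4}$, and the bottleneck-ratio upper bound $\Gap\le 2\Phi$ from \eqref{eq.jerrum}, applied in $\overline{G}$ to the clique $B$ formed by the side of $K_{m,m}$ away from the attachment, gives $\Gap(\overline{G})\le 4r/m\sim 8/n^{3/4}$. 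Hence $\Gap(G)+\Gap(\overline{G})\le 16/n^{3/4}$, which is below $2/\sqrt{n}$ for all large $n$, so the conjectured inequality is false; the paper only establishes the weaker (and essentially sharp in order) bound $\max\{\Gap(G),\Gap(\overline{G})\}=\Omega(1/n)$ of Theorem~\ref{thm:main1}.

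Concretely, the step you flag as ``the genuine obstacle'' --- infeasibility of the system of block-size inequalities --- fails because the system is feasible, and $G_{r,m}$ realizes it. Even your intermediate combinatorial target $\max\{\Phi(G),\Phi(\overline{G})\}\ge 2n^{-1/4}$ is false for this family: taking $S=V(K_r)$ in $G$ gives $|\,[S,S^c]_G|=1$ and $\mathrm{vol}(S)=r(r-1)+1$, so $\Phi(G)\lesssim n^{-1/2}$, while the cut at $B$ in $\overline{G}$ gives $\Phi(\overline{G})\le 2r/m\sim 4n^{-3/4}$; both are far below $2n^{-1/4}$. So the Cheeger reduction is not merely lossy --- the statement itself has to be abandoned, and the correct deliverable here is a counterexample construction of exactly the kind you suspected (sparse cut of $G$ at scale $n^{1/4}$ inside the handle, sparse cut of $\overline{G}$ across the bipartition of $K_{m,m}$), together with an explicit upper bound on each gap via \eqref{eqn: gap} and \eqref{eq.jerrum}.
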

	
	\noindent
	As observed in \cite{faught2023nordhaus}, the family of graphs described in our Remark~\ref{rem:diam2} attains the lower bound of Conjecture \ref{conjecture1}. It happens that the complement of each graph in this family is disconnected.  Conjecture \ref{conjecture2} arose from numerical observations suggesting that a better bound held if a graph and its complement were both connected. Among the present paper's results, we shall prove the first conjecture up to a constant factor, and disprove the second conjecture.  
	
	
	Here is what we shall prove instead of Conjecture \ref{conjecture1}. 
	\begin{theorem}\label{thm:main1}
		Let $G$ be a graph on $n$ vertices for any $n\geq 2$. Then $$\max\{\Gap(G),\Gap(\overline{G})\} \;\geq\; \frac{3-\sqrt{5}}{8}\frac{1}{n-1} \;=\; \frac{0.09549\ldots}{n-1} \,.  $$
	\end{theorem}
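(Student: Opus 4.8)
The plan is to read structural information off the second eigenfunction of the walk on $G$ and show that if $\Gap(G)$ is too small, then $\overline{G}$ is forced to contain two ``hub'' vertices that together dominate it, from which $\Gap(\overline{G})=\Omega(1/n)$. Precisely, I will prove that $\Gap(G)<\varepsilon$ implies $\Gap(\overline{G})\ge\varepsilon$, where $\varepsilon:=\frac{3-\sqrt5}{8}\cdot\frac1{n-1}$; this gives the theorem. To set up, let $\phi$ be a nonconstant eigenfunction of $P_G$ for its second largest eigenvalue $1-\Gap(G)$, normalized so that $\sum_v d^G_v\phi(v)=0$, and rescaled so that $\max_v\phi(v)=1$, attained at a vertex $w$; let $z$ attain $\min_v\phi(v)=:-c$, so $c>0$ since $\phi$ is nonconstant with $\pi_G$-mean zero. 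Evaluating $P_G\phi=(1-\Gap(G))\phi$ at $w$ and at $z$ gives
\[
\Gap(G)\,d^G_w=\sum_{y\sim_G w}\bigl(1-\phi(y)\bigr),\qquad
\Gap(G)\,c\,d^G_z=\sum_{y\sim_G z}\bigl(\phi(y)+c\bigr),
\]
where every summand is nonnegative because $-c\le\phi\le1$ pointwise. If $w\sim_G z$, the first identity contains the term $1-\phi(z)=1+c$, so $\Gap(G)\ge(1+c)/d^G_w\ge1/(n-1)>\varepsilon$ and there is nothing to prove. So assume $\Gap(G)<\varepsilon$; then $\Gap(G)\,d^G_w<\varepsilon(n-1)=\tfrac{3-\sqrt5}{8}<1$, hence $w\sim_G z$ is impossible, i.e.\ $w\sim_{\overline{G}}z$ (automatic, in particular, when $G$ is disconnected).

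Still assuming $\Gap(G)<\varepsilon$: by nonnegativity of the summands above, each $G$-neighbour $y$ of $w$ has $\phi(y)>1-\varepsilon(n-1)$ and each $G$-neighbour $y$ of $z$ has $\phi(y)<-c(1-\varepsilon(n-1))$. Fix a threshold $\tau\in[\varepsilon(n-1),1)$ and put $H=\{v:\phi(v)\ge1-\tau\}$, $L=\{v:\phi(v)\le-c(1-\tau)\}$, $M=V\setminus(H\cup L)$; these partition $V$, with $w\in H$, $z\in L$, $N_G(w)\subseteq H$, and $N_G(z)\subseteq L$. Taking complements, in $\overline{G}$ the vertex $w$ is adjacent to every vertex of $V\setminus H$ and the vertex $z$ is adjacent to every vertex of $V\setminus L$; combined with $w\sim_{\overline{G}}z$, this shows $\{w,z\}$ dominates $\overline{G}$ and in fact every vertex lies in $N_{\overline{G}}(w)\cup N_{\overline{G}}(z)$.

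It remains to turn this two-hub structure into $\Gap(\overline{G})\ge\varepsilon$, contradicting the standing assumption. One clean route is via hitting times of the walk on $\overline{G}$: from any vertex the walk enters $\{w,z\}$ within $n-1$ expected steps (each non-hub vertex has a hub neighbour, so probability at least $1/(n-1)$ of stepping onto a hub), and from a hub any target $x$ is one $\overline{G}$-step from $w$ or from $z$, so — using $w\sim_{\overline{G}}z$ to switch hubs cheaply — a bounded number of further hub excursions of expected length $O(n)$ reaches $x$; hence the maximal expected hitting time of $\overline{G}$ is $O(n)$, and the standard bound of the relaxation time by the maximal expected hitting time gives $\Gap(\overline{G})=\Omega(1/n)$. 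The same conclusion can be reached by a canonical-path estimate (discard all edges of $\overline{G}$ except the stars at $w$ and $z$ plus a few edges through an auxiliary vertex of $M$, route for each pair a path of length $\le4$ through the hubs, and bound the edge congestion) or by testing the Rayleigh quotient of the normalized Laplacian of $\overline{G}$ directly, using that $\sum_{v\notin H}(g(w)-g(v))^2+\sum_{v\notin L}(g(z)-g(v))^2$ already controls $\sum_v g(v)^2$ under the constraint $\sum_v d^{\overline{G}}_v g(v)=0$. In all versions the delicate point is that the hub-to-hub traffic is squeezed through the single edge $wz$ (or through $M$), so it must be dispersed and the threshold $\tau$ optimized against the resulting estimates; I expect this trade-off — rather than the structural deductions above, which are robust and already yield $\max\{\Gap(G),\Gap(\overline{G})\}=\Omega(1/n)$ with some explicit but worse constant — is where the precise value $\tfrac{3-\sqrt5}{8}$ is produced, and is the main obstacle.
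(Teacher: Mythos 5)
Your structural reduction is clean and correct as far as it goes: evaluating $P_G\phi=(1-\Gap(G))\phi$ at the extremal vertices $w,z$ does show that $\Gap(G)<\tfrac{1}{n-1}$ forces $\{w,z\}\in E(\overline{G})$, with $N_G(w)\subseteq H$ and $N_G(z)\subseteq L$, hence in $\overline{G}$ the two adjacent hubs $w,z$ satisfy $N_{\overline{G}}(w)\cup N_{\overline{G}}(z)=V$. But the proof stops exactly where the theorem lives, and the missing step is not a routine optimization. First, the ``clean route'' you propose is false as stated: two adjacent dominating hubs do \emph{not} give maximal expected hitting time $O(n)$ in $\overline{G}$ (take $\overline{G}$ to be a clique on $V\setminus\{z\}$ with $z$ pendant at $w$; hitting $z$ takes order $n^2$ steps). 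Second, and more fundamentally, the two-hub structure alone does not imply $\Gap(\overline{G})=\Omega(1/n)$ at all: let $\overline{G}$ consist of two cliques of size $n/2$, one containing $w$ and one containing $z$, joined only by the edge $\{w,z\}$. This has exactly your structure ($w,z$ adjacent, every vertex adjacent to $w$ or $z$), yet the bottleneck ratio of one clique is $O(1/n^2)$, so $\Gap(\overline{G})=O(1/n^2)$ by the Cheeger-type upper bound. (No contradiction with the theorem, since that $\overline{G}$ has complement with gap $\Theta(1)$ --- but it shows your final step must re-use the hypothesis $\Gap(G)<\varepsilon$, or finer eigenfunction information about $H$, $L$, $M$, in an essential way, which the proposal never does.) Consequently your claim that the structural part ``already yields $\Omega(1/n)$ with a worse constant'' is unsubstantiated, and the constant $\tfrac{3-\sqrt5}{8}$ is nowhere derived; you yourself flag this as ``the main obstacle.''

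For comparison, the paper takes a different route that resolves precisely this difficulty: it reduces to two cases by the standard diameter dichotomy (if $\diam(G)\ge 4$ then $\diam(\overline{G})=2$), proves $\Gap\ge\frac{\min\{C^*,2\}}{n-1}$ for diameter-$2$ graphs by a one-step Markovian (Type 2) coupling, and in the remaining case $\diam(G)=\diam(\overline{G})=3$ runs a coupling argument in $G$ which, if it fails to couple quickly (Case 2), extracts quantitative neighbourhood information (the sets $N_G^w(v)$, $N_G^v(w)$) that is then transferred to $\overline{G}$ to make the coupling in $\overline{G}$ succeed; the constant $\tfrac{3-\sqrt5}{8}$ comes from balancing the two cases via $\tfrac14-\tfrac{A}{2(1-2A)}=\tfrac{A}{2}$. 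That transfer of quantitative information between $G$ and $\overline{G}$ is exactly the ingredient your sketch is missing.
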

	The proof uses a probabilistic  
	construction together with graph theoretical arguments based on diameter, along the following lines.  It is well known that if the diameter of $G$ is more than 3, then the diameter of $\overline{G}$ must be 2.  We use a coupling argument to prove that $\Gap(H)\geq \frac{1}{n-1}$ if $H$ has diameter 2 (we actually prove something a bit better).  Allowing either $G$ or $\overline{G}$ to assume the role of $H$, we see that the only remaining case to consider is when both $G$ and $\overline{G}$ have diameter equal to 3, and that is the case where the constant $\frac{3-\sqrt{5}}{8}$ arises in our proof.
	
	We can also exhibit large families of graphs in which $\Gap(G)+\Gap(\overline{G})$ is bounded away from 0 by a constant.  This happens when the minimum degree is at least a constant times $n$ in both $G$ and in $\overline{G}$.  
	Specifically, we prove the following.  
	\begin{theorem} 
		\label{thm.complementLU}
		Let $L$ and $U$ be real constants such that $0<L< U<1$.  Then 
		for every $n\geq \,\frac{2}{1-U}$, and for every
		graph $G$ on $n$ vertices having the property that each vertex degree is in $[Ln,Un]$, we have
		\[    \max\left\{  \Gap(G), \, \Gap(\overline{G})\right\}   \;\geq \;  \frac{L^4(1-U)^4}{2^{13}}.
		\]
	\end{theorem}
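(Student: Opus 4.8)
The plan is to combine the discrete Cheeger inequality with a purely combinatorial fact: when all degrees are balanced, $G$ and $\overline{G}$ cannot both fail to be edge-expanders. Write $\Phi(H)$ for the conductance (bottleneck ratio) of the random walk on a graph $H$, so that by definition $e_H(S,\overline S)\geq \Phi(H)\,\mathrm{vol}_H(S)$ whenever $\mathrm{vol}_H(S)\leq\mathrm{vol}_H(\overline S)$, where $\overline S:=V\setminus S$; recall Cheeger's inequality $\Gap(H)\geq\tfrac12\Phi(H)^2$. Every vertex of $G$ has degree in $[Ln,Un]$, and since $n\geq 2/(1-U)$ forces $\deg_{\overline G}(v)=n-1-\deg_G(v)\geq(1-U)n/2$ for every $v$ (while $\deg_{\overline G}(v)\leq(1-L)n$), both $G$ and $\overline G$ have linear minimum and maximum degree. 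Call $H\in\{G,\overline G\}$ an \emph{$\epsilon$-expander} if $e_H(S,\overline S)>\epsilon n\min\{|S|,|\overline S|\}$ for every nonempty proper $S$; a short computation (splitting on whether $|S|\leq n/2$ and using $\mathrm{vol}_H(S)\leq\Delta_H|S|$) shows an $\epsilon$-expander $H$ has $\Phi(H)\geq\epsilon n/\Delta_H>\epsilon$. So it suffices to find a constant $\epsilon=\epsilon(L,U)$ such that at least one of $G,\overline G$ is an $\epsilon$-expander.

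I would first record a \emph{balance observation}: if $|S|\leq n/2$ and $e_G(S,\overline S)\leq\epsilon n|S|$, then summing $\deg_G(v)\geq Ln$ over $v\in S$ and subtracting the at most $|S|-1$ neighbours each $v$ has inside $S$ gives $e_G(S,\overline S)\geq|S|(Ln-|S|)$, so $|S|>(L-\epsilon)n$; the identical argument for $\overline G$ (via $\delta_{\overline G}\geq(1-U)n/2$) shows any ``bad cut'' of $\overline G$ likewise has both sides of size $\Omega(n)$. Now suppose for contradiction that $G$ has a bad cut $S$ (with $|S|\leq n/2$, $e_G(S,\overline S)\leq\epsilon n|S|$) and $\overline G$ has a bad cut $T$ (with $|T|\leq n/2$, $e_{\overline G}(T,\overline T)\leq\epsilon n|T|$). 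Since $e_G(T,\overline T)+e_{\overline G}(T,\overline T)=|T|\,|\overline T|$, the cut $(T,\overline T)$ is \emph{dense} in $G$: $e_G(T,\overline T)\geq|T|\,|\overline T|-\epsilon n|T|$.

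The core step plays these two cuts against each other through the four-cell refinement $A=S\cap T$, $B=S\cap\overline T$, $C=\overline S\cap T$, $D=\overline S\cap\overline T$, with sizes $a,b,\gamma,\delta$. Here $e_G(T,\overline T)=e_G(A,B)+e_G(C,D)+\bigl(e_G(A,D)+e_G(B,C)\bigr)$, where $e_G(A,B)\leq ab$, $e_G(C,D)\leq\gamma\delta$, and the last two terms are part of $e_G(S,\overline S)$, hence sum to at most $\epsilon n|S|$. Comparing with the density bound $e_G(T,\overline T)\geq|T|\,|\overline T|-\epsilon n|T|$ and expanding $|T|\,|\overline T|=(a+\gamma)(b+\delta)=ab+a\delta+\gamma b+\gamma\delta$, the $ab+\gamma\delta$ terms cancel and one is left with
\[
 a\delta+\gamma b\;\leq\;\epsilon n^{2}.
\]
Since $\gamma+\delta=|\overline S|\geq n/2$, one of $\gamma,\delta$ is $\geq n/4$; in either case this inequality forces first one, then --- using $|T|,|S|\leq n/2$ together with the balance lower bound $|T|>(1-U)n/4$ --- both of the cells $A,B$ whose union is $S$ to have size $O\!\bigl(\epsilon n/(1-U)\bigr)$, so $|S|=O\!\bigl(\epsilon n/(1-U)\bigr)$. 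Choosing $\epsilon=L(1-U)/32$ makes this smaller than $(L-\epsilon)n$, contradicting the balance observation. Hence one of $G,\overline G$ is an $\epsilon$-expander, so $\max\{\Phi(G),\Phi(\overline G)\}>L(1-U)/32$, and Cheeger's inequality yields $\max\{\Gap(G),\Gap(\overline G)\}\geq\tfrac12\bigl(L(1-U)/32\bigr)^{2}=L^{2}(1-U)^{2}/2^{11}$, which since $L(1-U)<1$ is stronger than the stated bound.

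The only genuinely delicate part should be this core step: isolating the clean inequality $a\delta+\gamma b\leq\epsilon n^{2}$ and then running the short case analysis so that all constants really close up. It is also worth noting that the hypothesis $n\geq 2/(1-U)$ enters precisely to give $\overline G$ a linear minimum degree, which the balance observation for $\overline G$ requires; the remaining ingredients --- Cheeger's inequality, the expander-to-conductance passage, and the degree-counting balance observation --- are routine.
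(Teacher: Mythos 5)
Your proposal is correct, but it follows a genuinely different route from the paper's. The paper invokes the Cheeger-type bound $\Phi^2/2\le\Gap$ only on $G$: assuming $\Gap(G)$ is small, it extracts a sparse cut $[S,S^c]_G$ with both sides of size $\Omega(n)$, then defines sets $A\subseteq S$ and $B\subseteq S^c$ of vertices almost fully joined across the cut in $\overline{G}$, and lower-bounds $\Gap(\overline{G})$ probabilistically, by running the Type 1 coupling for $\overline{G}$ with respect to $B\times B$ and applying Theorem~\ref{Thm:lower bound for the gap} (this is where the constant $2^{13}$ comes from); disconnected $G$ or $\overline{G}$ is treated separately via Theorem~\ref{thm.joingap}. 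You instead stay entirely on the isoperimetric side: from the complement identity $e_G(T,\overline{T})+e_{\overline{G}}(T,\overline{T})=|T|\,|\overline{T}|$ and the four-cell refinement of two putative sparse cuts you isolate $a\delta+\gamma b\le\epsilon n^2$, and the case analysis does close with your choice $\epsilon=L(1-U)/32$: one of $\gamma,\delta$ is at least $n/4$, which (using $|T|\ge(1-U)n/4$ and $|T|\le n/2$) forces $|S|\le 12\epsilon n/(1-U)$, contradicting the degree-count bound $|S|>(L-\epsilon)n$; hence one of $G,\overline{G}$ is an $\epsilon$-edge-expander and Cheeger's inequality applies to it. Your argument needs no coupling machinery, handles the disconnected case uniformly, and gives the stronger constant $L^2(1-U)^2/2^{11}$; what the paper's route buys is a further application of its coupling framework and explicit structural sets $A,B$ in $\overline{G}$, in parallel with the Kemeny's-constant argument it cites. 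The only places needing slightly more care in a written-up version are the ones you flagged: the expander-to-conductance passage (where $\mathrm{vol}(S)\le|E|$ rather than $|S|\le n/2$ is the defining constraint, handled by $\mathrm{vol}(S)\le\Delta_H\min\{|S|,|S^c|\}$) and spelling out the two cases of the core step with explicit constants, both of which check out.
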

	We do not know whether the order $\frac{1}{n}$ of the lower bound in Theorem \ref{thm:main1} and Conjecture \ref{conjecture1} is optimal when $G$ and $\overline{G}$ both are connected.  However, we provide an infinite family of graphs $G$ (see Figure~\ref{fig:graph G} in Section~\ref{sec6}) such that 
	$$\max\{\Gap(G),\Gap(\overline{G})\} \leq \frac{8}{n^{3/4}} \hspace{5mm}\hbox{whenever $G$ and $\overline{G}$ are both connected, where $n=|V(G)|$}.
	$$
	Since $\Gap(G)+ \Gap(\overline{G}) \leq {16}/{n^{\frac{3}{4}}}$, this disproves Conjecture~\ref{conjecture2}. It remains an open problem to identify the smallest exponent $p$ for which there exists a constant $c_p$ such that 
	\[     \Gap(G)+\Gap(\overline{G}) \;\geq \; \frac{c_p}{n^p} \hspace{5mm}\hbox{
		whenever $G$ and $\overline{G}$ are both connected, where $n=|V(G)|$}.
	\]
	Our results show that the smallest $p$ lies in the interval $\left[\frac{3}{4},1\right]$.
	

	Our main probabilistic tool is the \textit{coupling} of Markov chains, which is frequently used to obtain quantitative estimates for mixing times (see \cite{aldous2002reversible,LPW}). The key insight of this method is that the probability that a Markov chain has ``reached stationarity'' by a certain time is bounded above by the probability that two ``correlated'' copies of the Markov chain fail to meet each other; and the quality of the upper bound depends on how the correlation is constructed. In other words, the faster they meet, the more rapidly we can prove that the Markov chain becomes stationary. See Section~\ref{sec:MCcoupling} for details.
	
	The related paper \cite{kim2023bounds} discusses analogous Nordhaus-Gaddum problems for Kemeny's constant $\mathcal{K}(G)$ of a graph.  Kemeny's constant is a different way of measuring how rapidly a random walk traverses a graph.  It is defined by $\mathcal{K}(G)=\sum_{j\in V} m_{ij} \pi(j)$, where $m_{ij}$ is the expected time to arrive at $j$ for the first time for a random walk that starts at vertex $i$, and $\pi(j)$ is the equilibrium probability of vertex $j$.  Remarkably, this sum is the same for every $i$.  As observed in \cite{kim2023bounds}, Kemeny's constant satisfies
	\begin{equation}
	\label{eq.kemenygap}
	\frac{1}{\Gap(G)} \;\leq \; \mathcal{K}(G) \;\leq \;   \frac{n}{\Gap(G)} \hspace{5mm}\hbox{for every graph $G$ with $n$ vertices.}
	\end{equation}
	In particular, our Theorem \ref{thm.complementLU} implies that $\min\{\mathcal{K}(G),\mathcal{K}(\overline{G})\} \,\leq \;n\,L^4(1-U)^4/2^{13}$, which is essentially Proposition 3.5 in \cite{kim2023bounds}. However, other direct translations of results between $\Gap(G)$ and $\mathcal{K}(G)$ do not necessarily give bounds that are as good as have been proved directly; see for example Remark \ref{remark:join}.  
	
	The paper is organized as follows. Section~\ref{sec:notation} provides standard notation and background for graphs, random walks, and asymptotics.  Section~\ref{sec:MCcoupling} summarizes basic terminology and properties of Markov chains, spectral gap, and coupling.  Section~\ref{sec2} constructs certain couplings based on common neighbours to obtain results in the sections that follow. Section~\ref{sec3} shows that if the maximum degree of $G$ is $n-O(1)$ or $G$ is a join of two graphs, then 
	the gap of $G$ is at least proportional to $1/n$. 
	Section~\ref{sec4} proves Theorem \ref{thm:main1} and its accompanying results.
	Section~\ref{sec5} proves Theorem \ref{thm.complementLU}.
	Finally, Section~\ref{sec6} presents a family of graphs such that the sum of the spectral gap for $G$ and $\overline{G}$ is $O(1/n^{\frac{3}{4}})$.
	
	All results in this article can be equivalently written in the context of the relaxation time or the second smallest eigenvalue of the normalized Laplacian matrix.

	\subsection{Notation and preliminaries}
	\label{sec:notation}
	We now introduce notation and definitions which will be used throughout the article. 
	
	All graphs in this paper are simple.
	We denote the vertex set and the edge set of 
	a graph $G$ by $V(G)\equiv V$ and $E(G)$ respectively. 
	The degree of vertex $i$ is denoted by $\deg_G(i)$. We also let $\delta(G)$ denote the minimum degree of $G$, and $\Delta(G)$ denote the maximum degree. The \emph{neighbourhood} of vertex $i$, denoted $N_G(i)$, is the set of vertices adjacent to $i$. 
	
	We write 
	$v_1-v_2-\cdots-v_t$ to denote a path of length $t-1$ from $v_1$ to $v_t$, that is, a sequence of distinct vertices $v_1,v_2,\dots,v_t$ such that $v_i$ is adjacent to $v_{i+1}$ for $i=1,\dots,t-1$. The \textit{distance} between two vertices $i$ and $j$, denoted $\dist_G(i,j)$, is defined as the length of the shortest path between vertices $i$ and $j$. Moreover, the \emph{diameter} $\diam(G)$ of $G$ is defined as $\diam(G) = \max_{i, j \in V(G)} \dist_G(i,j)$. 
	
	We let $\overline{G}$ denote the \textit{complement} of $G$, i.e.\ the simple graph with  $V(\overline{G})=V(G)$ and $E(\overline{G})=\{\{v,w\}:v\neq w,\{v,w\}\not\in E(G)\}$.

	We write the random walk on $G$ as a sequence 
	$X_0,X_1,X_2,\ldots$ of $V(G)$-valued random variables forming a Markov chain with one-step transition probabilities 
	given by $P_G(\cdot,\cdot)$ as follows:
	\begin{equation*}
	P_G(v,w)  \;=\;   \Pr(X_1\,=\,w\,|\,X_0=v)   \;=\;
	\begin{cases}    (\deg_G(v))^{-1},   & \text{if }\{v,w\}\in E(G); \\   0, & \text{otherwise} .  \end{cases}
	\end{equation*}
	The matrix $P_G=[P_G(i,j)]$ is called the \textit{transition matrix} of the random walk. Since the random walk is fully described by $P_G$ together with a probability distribution for $X_0$ (the ``initial distribution''), we shall abuse notation that $P_G$ interchangeably indicates the random walk or the transition matrix.
	For $t\in \mathbb{N}$, we shall write the $t$-step transition probabilities  
	as $P_G^{t}(v,w)\,=\,\Pr(X_t=w\,|\,X_0=v)$.  
	For $S\subseteq V$, $P_G^{t}(v,S)$ denotes $\sum_{w\in S}P_G^{t}(v,w)$.
	
	We shall also use the associated ``lazy'' random walk on $G$, whose one-step transition probabilities $\mathfrak{P}_G(v,w)$ are 
	\[
	\mathfrak{P}_G(v,w) \;=\;  \frac{1}{2}\left(\mathds{1}_{\{v=w\}} + P_G(v,w)\right)
	\]
	where $\mathds{1}_A$ is the indicator function of set $A$. Intuitively, we can think of the lazy random walk as tossing a fair coin at each step; if the coin lands Heads, the process takes a step according to $P_G$, but if it lands Tails then the process does not move.  The associated transition probability matrix is $\mathfrak{P}_G=\frac{1}{2}(I+P_G)$ where $I$ is the identity matrix.
	Assuming that $G$ is connected, the Markov chain $P_G$ (or $\mathfrak{P}_G$) has a unique stationary distribution, which 
	we shall denote $\pi(\cdot)$.  It is well known that 
	\[    \pi(v)   \;=\;  \frac{ {\deg_G(v)}}{2|E(G)|}    \hspace{5mm}\hbox{for every $v\in V(G)$.}
	\]

	\begin{remark}
		If $G$ is clear from the context, we shall omit the subscript $G$ for $N_G(\cdot)$, $\mathrm{deg}_G(\cdot)$, $\mathrm{dist}_G(\cdot,\cdot)$, $P_G(\cdot,\cdot)$, $\mathfrak{P}_G(\cdot,\cdot)$, $P_G$, and $\mathfrak{P}_G$.
	\end{remark}
	

	We use the following asymptotic notation. In this paper, 
	$n$ always denotes  
	$|V(G)|$, the order of $G$. If $G_n$ represents a graph of order $n$ in a sequence or family of graphs, and $f$ is some positive-valued graph invariant, 
	then we write $f(G_n) = O(g(n))$ if $\limsup_{n\to\infty} \frac{f(G_n)}{g(n)}$ is finite, and understand this to mean that $f(G_n)$ has at most the same order of magnitude as $g(n)$. Furthermore, we write $f(G_n) = \Omega(g(n))$ if $g(n) = O(f(G_n))$, and $f(G_n) = \Theta(g(n))$ if $f(G_n) = O(g(n))$ and $f(G_n) = \Omega(g(n))$. We write $f(G_n) = o(g(n))$ if $\lim_{n\rightarrow\infty} \frac{f(G_n)}{g(n)} = 0$, 
	and $f(G_n) \sim g(n)$ if $\lim_{n\rightarrow\infty} \frac{f(G_n)}{g(n)} = 1$. 
	We usually omit the subscript $n$ when referring to a family of graphs. 

	
	We shall also use the following elementary inequality.
	\begin{equation}
	\label{eq.rootineq}
	1-(1-a)^{1/k}   \;  \geq \;  \frac{a}{k}     \hspace{6mm}\hbox{whenever $0<a<1$ and $k$ is a positive integer.}
	\end{equation}
	This holds because $1-c^{1/k} \,=\,(1-c)/(1+c^{1/k}+c^{2/k}+\ldots +c^{(k-1)/k})  \,\geq \,(1-c)/k$, where $c=1-a\in (0,1)$.
	
	\subsection{Markov chains, spectral gap, and coupling}
	\label{sec:MCcoupling}
	In this paper we shall only work with the random walk $P_G$ and lazy walk $\mathfrak{P}_G$ for a graph $G$ with vertex set $V$, but the present subsection holds for a general Markov chain $P$ on a finite state space.

	Let the row vector $\vec{\pi}$ be the 
	stationary distribution of $P$:  that is,
	\[   \vec{\pi} \,=\, \vec{\pi}\, P    \hspace{5mm}\hbox{and}\hspace{5mm}\sum_{i\in V}\pi(i)\,=\,1.
	\]
	Let $\vec{\mu}$ be any probability distribution vector on $V$, and let $t\in \mathbb{N}$.   
	Then $\vec{\mu}\,P^t$ is the probability distribution of the corresponding Markov chain $X_0,X_1,\ldots$ at time $t$, given 
	that $X_0$ has distribution $\vec{\mu}$.
	\[    \left( \vec{\mu}\,P^t\right)(j)  \;=\; \sum_{i\in  V}\mu(i)\,P^t(i,j) \,.
	\]
	In particular, if $\vec{\mu}=\vec{\pi}$, then the distribution of $X_t$ is exactly $\vec{\pi}$ for every $t$.

	We know that if $P$ is aperiodic, then $\lim_{t\rightarrow\infty}\vec{\mu}\,P^t=\vec{\pi}$ for every such $\mu$; we want to know the speed of this convergence.
	Let
	\[   d(t)  \; :=  \;\frac{1}{2}  \sup_{\vec{\mu} \text{ prob dist}} \|  \vec{\mu}\,P^t\,-\, \vec{\pi}\|_1   
	\;=\; \frac{1}{2} \max_{i\in V} \|  \vec{\delta_{i}}\,P^t\,-\, \vec{\pi}\|_1  
	\]
	where $\|\cdot\|_1$ is the usual 1-norm and $\vec{\delta}_i$ is the unit vector with $i^{th}$ coordinate equal to 1.  (We use the factor $\frac{1}{2}$ to make $d(t)$ agree with standard probability references such as \cite{LPW}, where $d(t)$ is the maximum \textit{total variation distance} between $\vec{\mu}P^t$ and $\vec{\pi}$.) 
	
	

	Let the eigenvalues of $P$ be $1=\lambda_1>\lambda_2\geq \ldots \geq \lambda_n\geq -1$. 
	We define the spectral gap of $P$ to be $1-\lambda_2$, and denote it by $\Gap(P)$, and we write $\Gap(G)$ for $\Gap(P_G)$. It is well known (\textit{e.g.} \cite[Section~12.2]{LPW}) that 
	\begin{equation}
	\label{eq.limspecradius}
	\lim_{t\rightarrow \infty}d(t)^{1/t} \;=\,  \max\{\lambda_2,|\lambda_n|\}   \;=:\;  \lambda^*(P).
	\end{equation}
	Consider the lazy chain $\mathfrak{P} =\frac{1}{2}(I+P)$. Note that the lazy chain is aperiodic, even if $P$ is periodic.
	The eigenvalues of $\mathfrak{P}$ are $\frac{1}{2}(1+\lambda)$ where $\lambda$ ranges over eigenvalues of $P$; hence they are 
	all nonnegative, and $\lambda^*(\mathfrak{P}) \,=\, (1+\lambda_2)/2$.
	Therefore 
	\begin{equation}
	\label{eq.lazygap}   \mathrm{Gap}(\mathfrak{P})\; =\; 1-\frac{1}{2}(1+\lambda_2)  \,=\, 
	\frac{1}{2}(1-\lambda_2)\;=\; \frac{1}{2}\,\Gap(P).
	\end{equation}

	
	One very useful probabilistic technique for estimating spectral gap is \textit{coupling}, which we now introduce.
	
	Let $\vec{\rho}$ and $\vec{\tau}$ be two probability distributions on $V$.
	Let Joint$(\vec{\rho},\vec{\tau})$ be the set of all joint probability distributions $\psi$ on $V\times V$
	whose marginal distributions are $\vec{\rho}$ and $\vec{\tau}$; that is,
	\begin{eqnarray*}
		\psi(i,j) & \geq & 0   \hspace{5mm}\text{for all }(i,j)\in V\times V,
		\\
		\sum_{i,j}\psi(i,j) & = & 1 ,  
		\\
		\rho(i)   & = & \sum_j \psi(i,j)   \hspace{5mm}\text{for all }i\in V,
		\\
		\tau(j)   & = & \sum_i \psi(i,j)   \hspace{5mm}\text{for all }j\in V.
	\end{eqnarray*}
	Such a $\psi$ is called a \textit{coupling} of $\vec{\rho}$ and $\vec{\tau}$.
	The following lemma is well known (e.g.\ Proposition 4.7 in \cite{LPW}).
	
	\begin{lemma}  
		\label{lem:jointbound}
		\textit{Let $\psi\in \text{Joint}(\vec{\rho},\vec{\tau})$.  Let $(X,Y)$ be a random 2-vector with distribution $\psi$. Then}
		\[    \frac{1}{2}\|  \vec{\rho}\,-\,\vec{\tau}\|_1 \,\leq\,  \,\Pr(X\neq Y).
		\]
	\end{lemma}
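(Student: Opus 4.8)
The plan is to bound the $1$-norm of the difference of the two marginals by a quantity depending on the coupling, then identify that quantity with (twice) the probability that the two coordinates disagree. First I would observe that for any subset $A\subseteq V$ one has $\vec\rho(A)-\vec\tau(A)=\Pr(X\in A)-\Pr(Y\in A)$, and since the event $\{X\in A\}\cap\{Y\notin A\}$ is contained in $\{X\neq Y\}$, we get $\vec\rho(A)-\vec\tau(A)=\Pr(X\in A,\,Y\notin A)-\Pr(X\notin A,\,Y\in A)\le \Pr(X\in A,\,Y\notin A)\le\Pr(X\neq Y)$. Taking the supremum over $A$ gives $\max_{A}\bigl(\vec\rho(A)-\vec\tau(A)\bigr)\le\Pr(X\neq Y)$.

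Next I would connect the left-hand side to the $1$-norm. The standard identity is $\tfrac12\|\vec\rho-\vec\tau\|_1=\max_{A\subseteq V}\bigl(\vec\rho(A)-\vec\tau(A)\bigr)$: indeed, writing $A^+=\{i:\rho(i)\ge\tau(i)\}$, the maximum on the right is attained at $A^+$ and equals $\sum_{i\in A^+}(\rho(i)-\tau(i))$; on the other hand $\sum_{i}|\rho(i)-\tau(i)|=\sum_{i\in A^+}(\rho(i)-\tau(i))+\sum_{i\notin A^+}(\tau(i)-\rho(i))$, and since both $\vec\rho$ and $\vec\tau$ sum to $1$ the two sums on the right are equal, each being half of $\|\vec\rho-\vec\tau\|_1$. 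Combining this with the previous paragraph yields $\tfrac12\|\vec\rho-\vec\tau\|_1\le\Pr(X\neq Y)$, which is the claim.

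A cleaner variant, which avoids invoking the $\max_A$ characterization as a black box, is to argue directly on coordinates: for each $i$, $\rho(i)-\tau(i)=\Pr(X=i)-\Pr(Y=i)=\Pr(X=i,Y\neq i)-\Pr(X\neq i,Y=i)$, so $|\rho(i)-\tau(i)|\le\Pr(X=i,Y\neq i)+\Pr(X\neq i,Y=i)$. Summing over $i\in V$ and noting that each ordered disagreement pair $(X,Y)$ with $X\neq Y$ is counted exactly twice (once through $X$'s value, once through $Y$'s value), we obtain $\|\vec\rho-\vec\tau\|_1\le 2\Pr(X\neq Y)$. Dividing by $2$ finishes the proof.

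There is no real obstacle here; the lemma is elementary and the only point requiring a moment's care is the double-counting step (equivalently, not double-counting the diagonal $\{X=Y\}$, which contributes nothing since those terms cancel exactly in $\rho(i)-\tau(i)$). I would present the coordinate-wise version as the main argument since it is self-contained, and perhaps remark that it shows $\tfrac12\|\vec\rho-\vec\tau\|_1$ equals the total-variation distance, for which the coupling inequality is classically tight.
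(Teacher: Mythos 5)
Your proposal is correct, and the coordinate-wise argument you choose as the main one is essentially the paper's proof: the paper also writes $\rho(k)-\tau(k)=\sum_{j\neq k}\psi(k,j)-\sum_{i\neq k}\psi(i,k)$, applies the triangle inequality, and sums over $k$ to get $2\Pr(X\neq Y)$. Your first variant via $\max_A(\rho(A)-\tau(A))$ is a fine alternative but is not needed, and your handling of the cancelling diagonal and double-counting matches the paper's computation exactly.
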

	
	\begin{proof}
		We see that
		\begin{eqnarray*}
			\sum_k |\rho(k)\,-\,\tau(k)|& =& \sum_k \left|\sum_{j\neq k}\psi(k,j)-\sum_{i\neq k}\psi(i,k)\right| 
			\\
			& \leq & \sum_k \sum_{j\neq k}\psi(k,j)  \,+\, \sum_k  \sum_{i\neq k}\psi(i,k)
			\\
			& = & 2\,\Pr(X\neq Y).
		\end{eqnarray*}
		This completes the proof.
	\end{proof}
	
	To get upper bounds on $\|\vec{\mu}\,\mathfrak{P}^t \,-\, \vec{\theta}\,\frak{P}^t\|_1$, we want to look at 
	Joint$(\vec{\mu}\,\frak{P}^t \,,\, \vec{\theta}\,\mathfrak{P}^t)$.
	That is, we are interested in couplings of $X_t$ and $Y_t$, where $X_0,X_1,\ldots$ is a Markov chain for $\mathfrak{P}$
	with initial distribution $\vec{\mu}$, and $Y_0,Y_1,\ldots$ is a Markov chain for $\mathfrak{P}$
	with initial distribution $\vec{\theta}$.
	
	We shall create a $V\times V$-valued Markov chain $\{(X^{(1)}_t,X^{(2)}_t):t=0,1,2,\ldots\}$ such that 
	\\
	$\bullet$ $\{X^{(1)}_t:t=0,1,\ldots\}$ is a Markov chain on $V$ with matrix $\mathfrak{P}$ and initial distribution $\vec{\mu}$, and
	\\
	$\bullet$ $\{X^{(2)}_t:t=0,1,\ldots\}$ is a Markov chain on $V$ with matrix $\mathfrak{P}$ and initial distribution $\vec{\theta}$.
	\\
	Letting $\textbf{P}((x_1,x_2),(y_1,y_2))=\Pr\left((X^{(1)}_{1},X^{(2)}_{1})= (x_2,y_2)\;|\; (X^{(1)}_0,X^{(2)}_0) = (x_1,y_1)\right)$, 
	the above requirements are 
	achieved by constructing transition probabilities $\textbf{P}((x_1,x_2),(y_1,y_2))$  for $(x_1,x_2)$ and $(y_1,y_2)$ in $V\times V$  such that
	\begin{align}
	\label{eq.consistency}
	\begin{split}
	\sum_{y_2} \textbf{P}((x_1,x_2),(y_1,y_2))  = & \;\mathfrak{P}(x_1,y_1)   \hspace{6mm}\hbox{for all $x_1,x_2,y_1\in V$, and }
	\\
	\sum_{y_1} \textbf{P}((x_1,x_2),(y_1,y_2))  = & \;\mathfrak{P}(x_2,y_2)   \hspace{6mm}\hbox{for all $x_1,x_2,y_2\in V$}.
	\end{split}
	\end{align}
	Such a Markov chain on $V\times V$ is called a (Markovian) coupling of the chains $\{X_t^{(1)}\}$ and $\{X_t^{(2)}\}$.
	
	Once we have done this, then for each $t$ we see that the distribution of $(X_t^{(1)},X_t^{(2)})$ is in 
	Joint$(\vec{\mu}\,\mathfrak{P}^t \,,\, \vec{\theta}\,\mathfrak{P}^t)$, and hence
	\begin{equation}
	\label{eq.normleqneq}
	\frac{1}{2}\|  \vec{\mu}\,\mathfrak{P}^t \,-\, \vec{\theta}\,\mathfrak{P}^t\|_1   \;\leq \;  \,\Pr\left(X^{(1)}_t \neq X^{(2)}_t \right).
	\end{equation}
	Thus our task of getting a good lower bound on Gap$(G)$ is converted to getting a good upper bound on the above probability as  a function of $t$.
	We need some cleverness to create a suitable chain on $V\times V$ for which good bounds can be proved.
	Note, for example, that two independent chains would not be useful here, since in that case we do not even expect the right-hand
	side of Equation (\ref{eq.normleqneq}) to converge to 0 as $t\rightarrow \infty$.

	\section{Type 1 and Type 2 couplings}\label{sec2}
	Let $G$ be a graph with vertex set $V$.  For each nonnegative integer $i$, we define $D_i$ to be the set of all pairs of vertices whose distance from each other equals $i$.
	We shall deal with the random walk $P$ and the lazy walk $\mathfrak{P}$ on $G$.

	Let $D$ be a set of pairs $(w_1,w_2)$ in $V\times V$ such that $w_1$ and $w_2$ (not necessarily distinct vertices) have at least one common neighbour.  Although there is flexibility in specifying the set $D$, it is necessarily contained in  $D_0\cup D_1\cup D_2$.
	
	For $(w_1,w_2)\in D$, we define
	\[    U_{w_1,w_2}(x)  \;=\;  \begin{cases} \frac{1}{\left|N(w_1)\cap N(w_2) \right|}  & \hbox{if }x\in N(w_1)\cap N(w_2), \\  
	0 & \hbox{otherwise}. \end{cases}
	\]
	Then $U_{w_1,w_2}$ is a probability distribution on $V$. We see that
	\begin{align*}
	0 \;\leq \;  \frak{P}(w_2,x) -\frac{C_{w_1,w_2}}{2(n-1)}U_{w_1,w_2}(x) \;=:\; Q^*_{w_1}(w_2,x)\quad\quad\text{for $x\in V$}
	\end{align*}
	where $C_{w_1,w_2} = |N(w_1)\cap N(w_2)|$. Define
	$$
	Q_{w_1}(w_2,x) = \frac{2(n-1)}{2(n-1)-C_{w_1,w_2}}Q^*_{w_1}(w_2,x).
	$$
	Observe that $Q_{w_1}(w_2,\cdot)$ is a probability distribution.
	Then, for any $(w_1,w_2)\in D$, 
	$$
	\frak{P}(w_2,x) = \left(1-\frac{C_{w_1,w_2}}{2(n-1)}\right)Q_{w_1}(w_2,x)+ \frac{C_{w_1,w_2}}{2(n-1)}U_{w_1,w_2}(x).
	$$
	Given $(w_1,w_2)\in D$, we interpret this decomposition of the distribution $\frak{P}(w_2,\cdot)$ as follows.  We toss a coin that has probability $1-\frac{C_{w_1,w_2}}{2(n-1)}$ of landing Heads. If the coin lands Tails (which it does with probability $\frac{C_{w_1,w_2}}{2(n-1)}$), then we select $x$ uniformly at random from the set $N(w_1)\cap N(w_2)$. If the coin lands Heads, then we select $x$ according to the ``residual'' probability 
	distribution $Q_{w_1}(w_2,\cdot)$.  
	
	Given the transition probability matrix $\frak{P}=\frac{1}{2}(I+P)$, we shall define a Markov chain $\{(X_t^{(1)},X_t^{(2)})\}_{t\geq 0}$ on $V\times V$ with transition probability $\mathbf{P}$ as follows:
	\bigskip
	
	\begin{subequations}
		$\textbf{P}((w_1, w_2),(v_1, v_2))$\vspace{-1mm}\label{Eqn:2-coupling}
		\begin{numcases}{=}\label{Eqn:2-coupling-1}
		\frak{P}(w_1,v_1)\mathds{1}_{\{v_1 = v_2\}}, & \text{if $w_1 = w_2$;}\\\label{Eqn:2-coupling-2}
		\left(1-\frac{C_{w_1,w_2}}{2(n-1)}\right)Q_{w_2}(w_1,v_1)Q_{w_1}(w_2,v_2), & \text{if $(w_1,w_2)\in D\backslash D_0$ and $v_1\neq v_2$;}\\\label{Eqn:2-coupling-3}
		\left(1-\frac{C_{w_1,w_2}}{2(n-1)}\right)Q_{w_2}(w_1,v_1)Q_{w_1}(w_2,v_1)+\frac{C_{w_1,w_2}}{2(n-1)}U_{w_1,w_2}(v_1), & \text{if $(w_1,w_2)\in D\backslash D_0$ and $v_1= v_2$;}\\ \label{Eqn:2-coupling-4}
		\frak{P}(w_1,v_1)\frak{P}(w_2,v_2), & \text{otherwise.}
		\end{numcases}
	\end{subequations}
	It is easy to check that this is a coupling of two desired Markov chains with matrix $\frak{P}$, by verifying \eqref{eq.consistency}.
	We call the Markov chain the \textit{Type 1 coupling for $G$ with respect to $D$}. 
	
	We can think of the mechanism of the chain $\vec{X}_t = (X_t^{(1)},X_t^{(2)})$ as follows.  If the two components are equal at some time $T$, then they remain equal for all times $t\geq T$ (by Eq.~\eqref{Eqn:2-coupling-1}). Otherwise, if $\vec{X}_t\not\in D$, then the choices of $X^{(1)}_{t+1}$ and $X^{(2)}_{t+1}$ are made independently (by Eq.~\eqref{Eqn:2-coupling-4}); but if $\vec{X}_t=(x_1,x_2)\in D$, then we first toss one coin that has probability $1-\frac{C_{x_1,x_2}}{2(n-1)}$ of landing Heads. 
	If the coin lands Tails, then we select $z$ uniformly at random from the set $N(x_1)\cap N(x_2)$, and \textit{both}  $X^{(1)}_{t+1}$ and $X^{(2)}_{t+1}$ are set equal to $z$; and if the coin lands Heads, then we select $X^{(1)}_{t+1}$ and $X^{(2)}_{t+1}$ independently according to the respective residual probabilities distributions $Q_{x_2}(x_1,\cdot)$ and $Q_{x_1}(x_2,\cdot)$ (Eqs.~\eqref{Eqn:2-coupling-2} and~\eqref{Eqn:2-coupling-3}).
	
	Once the two chains meet, they stay together forever.  Accordingly, 
	we define the \textit{coalescing time} $\tau$ to be the random variable 
	\[    \tau \;:=\;  \min\{t\geq 0:  X_t^{(1)}\,=\, X_t^{(2)} \}.
	\]
	Thus an important property of our chain $\{\vec{X}_t\}$ is that $X_t^{(1)}\neq X_t^{(2)}$ if and only if $\tau>t$.
	
	
	Now we shall define a slightly different Markovian coupling which will provide a sharper bound on the spectral gap in Section~\ref{sec3} than the bound obtained by a Type 1 coupling.  
	This version, when not in $D$, has the two components either both staying still or both moving.
	
	Given 
	the transition probability 
	matrix $\frak{P}=\frac{1}{2}(I+P)$, define a different Markov chain $\{(X_t^{(1)},X_t^{(2)})\}_{t\geq 0}$ on $V\times V$ with transition probability $\mathbf{P}$ as follows:
	\bigskip
	
	\begin{subequations}
		$\textbf{P}((w_1, w_2),(v_1, v_2))$\vspace{-1mm}\label{Eqn:2-m-coupling}
		\begin{numcases}{=}\label{Eqn:2-m-coupling-1}
		\frak{P}(w_1,v_1)\mathds{1}_{\{v_1 = v_2\}}, & \text{if $w_1 = w_2$;}\\\label{Eqn:2-m-coupling-2}
		\left(1-\frac{C_{w_1,w_2}}{2(n-1)}\right)Q_{w_2}(w_1,v_1)Q_{w_1}(w_2,v_2), & \text{if $(w_1,w_2)\in D\backslash D_0$ and $v_1\neq v_2$;}\\\label{Eqn:2-m-coupling-3}
		\left(1-\frac{C_{w_1,w_2}}{2(n-1)}\right)Q_{w_2}(w_1,v_1)Q_{w_1}(w_2,v_1)+\frac{C_{w_1,w_2}}{2(n-1)}U_{w_1,w_2}(v_1), & \text{if $(w_1,w_2)\in D\backslash D_0$ and $v_1= v_2$;}\\\label{Eqn:2-m-coupling-4}
		\frac{1}{2}P(w_1,v_1)\mathds{1}_{\{w_2=v_2\}}+\frac{1}{2}P(w_2,v_2)\mathds{1}_{\{w_1=v_1\}}, & \text{otherwise.}
		\end{numcases}
	\end{subequations}
	It can be verified that this Markov chain is a coupling
	of two chains on $V$ with matrix $\frak{P}$. We call it the \textit{Type 2 coupling for $G$ with respect to $D$}.

	The following result is crucial as it will be used throughout this paper.
	
	\begin{theorem}\label{Thm:lower bound for the gap}
		Let $G$ be a connected graph on $n$ vertices, and $D$ be a set of pairs of vertices that have at least one common neighbour, i.e.\ $N(v)\cap N(w)\neq \emptyset$ for every $(v,w)\in D$. Consider the Type 1 (or Type 2) coupling $\{(X_t^{(1)},X_t^{(2)})\}_{t\geq 0}$ for $G$ with respect to $D$. Suppose that there exist $k>0$ and $0<A<1$ such that 
		\[ \Pr\left(\left. (X_{t+k}^{(1)},X_{t+k}^{(2)})\in D \cup D_0\,\right|\,  (X_{t}^{(1)},X_{t}^{(2)})=(x_1,x_2)\right)\geq A \hspace{5mm}\hbox{for all }(x_1,x_2)\not\in D.
		\]
		Let $C^* = \min\{C_{v,w} : (v,w)\in D\}$. Then
		\[
		\Gap(G) \geq \;  \frac{C^*A}{(n-1)(k+1)}.
		\]  
	\end{theorem}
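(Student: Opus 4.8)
The plan is to bound the coalescing time $\tau$ from the coupling, then invoke the standard fact that $\Gap(\mathfrak P) \geq 1/\mathbb E[\tau \mid \text{worst start}]$ (equivalently, that $d(t)$ decays geometrically with a rate controlled by the coalescence probability), combined with Eq.~\eqref{eq.lazygap} to pass back to $\Gap(G)$. Concretely, I would show that from \emph{any} starting pair $(x_1,x_2)$, the coupled chain has probability at least $\frac{C^*A}{2(n-1)}$ of coalescing within $k+1$ steps; iterating this over blocks of length $k+1$ gives $\Pr(\tau > m(k+1)) \leq \left(1-\frac{C^*A}{2(n-1)}\right)^m$, and hence via Eq.~\eqref{eq.normleqneq} a bound on $d(t)$ for the lazy walk that yields $\Gap(\mathfrak P) \geq \frac{C^*A}{2(n-1)(k+1)}$ after taking logarithms (using $-\log(1-u)\geq u$), so $\Gap(G) = 2\,\Gap(\mathfrak P) \geq \frac{C^*A}{(n-1)(k+1)}$.

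\textbf{The key steps, in order.} First, establish the one-block coalescence estimate. Start the coupled chain at an arbitrary pair $(x_1,x_2)$. If $x_1=x_2$ the chains are already coalesced. If $(x_1,x_2)\in D\setminus D_0$, then by the decomposition of the transition rule (Eqs.~\eqref{Eqn:2-coupling-2}–\eqref{Eqn:2-coupling-3}, or \eqref{Eqn:2-m-coupling-2}–\eqref{Eqn:2-m-coupling-3}) the two components are set equal in one step with probability exactly $\frac{C_{x_1,x_2}}{2(n-1)} \geq \frac{C^*}{2(n-1)}$. If $(x_1,x_2)\notin D\cup D_0$, apply the hypothesis: with probability at least $A$ the chain lands in $D\cup D_0$ after $k$ steps, and then — conditioning on that event and on the position at time $t+k$ — one more step coalesces with probability at least $\frac{C^*}{2(n-1)}$ (if the position is in $D\setminus D_0$) or the chains are already equal (if the position is in $D_0\setminus D$, i.e.\ $X^{(1)}_{t+k}=X^{(2)}_{t+k}$, which by Eq.~\eqref{Eqn:2-coupling-1} persists). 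So in all cases, $\Pr(\tau \leq t+k+1 \mid \vec X_t = (x_1,x_2),\ \tau>t) \geq \frac{C^*A}{2(n-1)}$. Second, iterate: by the Markov property, partitioning time into consecutive blocks of length $k+1$ gives $\Pr(\tau > m(k+1)) \leq \left(1 - \frac{C^*A}{2(n-1)}\right)^m$ for every integer $m\geq 0$. Third, translate into a spectral-gap bound: by Eq.~\eqref{eq.normleqneq}, $d_{\mathfrak P}(t) \leq \Pr(\tau>t)$ for the lazy walk started from worst-case initial distributions, so $\lambda^*(\mathfrak P) = \lim_t d_{\mathfrak P}(t)^{1/t} \leq \left(1-\frac{C^*A}{2(n-1)}\right)^{1/(k+1)}$; since $\mathfrak P$ has all eigenvalues nonnegative, $\lambda^*(\mathfrak P) = 1-\Gap(\mathfrak P)$, hence $\Gap(\mathfrak P) \geq 1 - \left(1-\frac{C^*A}{2(n-1)}\right)^{1/(k+1)} \geq \frac{1}{k+1}\cdot\frac{C^*A}{2(n-1)}$ by inequality \eqref{eq.rootineq}. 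Finally, $\Gap(G) = 2\,\Gap(\mathfrak P) \geq \frac{C^*A}{(n-1)(k+1)}$ by Eq.~\eqref{eq.lazygap}.

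\textbf{The main obstacle} I anticipate is the bookkeeping in the one-block estimate when $k>1$: I need to be careful that the hypothesis only controls the event $\{\vec X_{t+k}\in D\cup D_0\}$ \emph{from starts outside $D$}, whereas along the $k$ intermediate steps the chain may pass in and out of $D$. The clean way around this is to only use the hypothesis when the block \emph{begins} outside $D\cup D_0$, and to handle a block beginning inside $D\setminus D_0$ by the immediate one-step coalescence (discarding the remaining $k$ steps of that block), and a block beginning inside $D_0$ trivially. A small subtlety: inequality \eqref{eq.rootineq} as stated requires $0<a<1$ with $a=\frac{C^*A}{2(n-1)}$; this holds since $C^*\leq n-1$ and $0<A<1$ force $0<a<1$ (for $n\geq 2$), so the final step is legitimate. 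One should also note that the argument applies verbatim to both the Type 1 and Type 2 couplings, since the only features used are Eqs.~\eqref{Eqn:2-coupling-1}–\eqref{Eqn:2-coupling-3} (shared by both) and the fact that each is a genuine coupling, so that Eq.~\eqref{eq.normleqneq} holds.
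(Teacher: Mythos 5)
Your proposal is correct and follows essentially the same route as the paper: a one-block estimate showing coalescence within $k+1$ steps with probability at least $\frac{C^*A}{2(n-1)}$, geometric iteration over blocks, passage to the spectral gap of $\mathfrak{P}$ via Lemma~\ref{lem:jointbound}, Equations~\eqref{eq.normleqneq}, \eqref{eq.limspecradius}, \eqref{eq.rootineq}, and finally Equation~\eqref{eq.lazygap}; the paper simply packages your second and third steps as Lemma~\ref{lem:taubound}. The only nitpick is that coalescence from a state in $D\setminus D_0$ occurs with probability \emph{at least} (not exactly) $\frac{C_{x_1,x_2}}{2(n-1)}$, which is the direction you need anyway.
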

	
	The proof will use the following lemma.
	
	\begin{lemma}\label{lem:taubound}
		Let $\{\vec{X}_t\}=\{(X^{(1)}_t,X^{(2)}_t)\}$ be a Markovian coupling of two Markov chains, where each $X^{(i)}_t$ has transition matrix $P$. Assume there exist a positive integer $J$ and a positive real $A_0<1$
		such that $\Pr\left(\left. \vec{X}_{t+J}\in D_0\,\right|\,\vec{X}_t=(x_1,x_2)\right)\geq A_0$ for every $(x_1,x_2)\not\in D_0$.  
		Then Gap$(P)\geq A_0/J$.
	\end{lemma}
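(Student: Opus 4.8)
The plan is to turn the coalescence hypothesis into geometric decay of the worst-case total variation distance $d(\cdot)$, and then read the gap off the rate identity \eqref{eq.limspecradius}. First, since the hypothesis only constrains transitions out of states \emph{not} in $D_0$, I would reduce to the case of a \emph{coalescing} coupling: replace $\{\vec X_t\}$ by the coupling that agrees with it up to the first time $\tau:=\min\{t\ge 0:X^{(1)}_t=X^{(2)}_t\}$ the coordinates meet and afterwards runs both coordinates as a single $P$-chain. This is again a Markovian coupling of two $P$-chains, it satisfies $\{X^{(1)}_t\neq X^{(2)}_t\}=\{\vec X_t\notin D_0\}=\{\tau>t\}$, and it inherits the hypothesis: on $\{\vec X_t=(x_1,x_2)\}$ with $(x_1,x_2)\notin D_0$ it has not yet coalesced, so on $[t,t+J]$ it evolves exactly as the original chain, and reaching $D_0$ by time $t+J$ is no less likely than landing in $D_0$ exactly at time $t+J$. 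So from now on I assume $\{\vec X_t\}$ is coalescing.

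Next I would iterate. Conditioning on $\vec X_{mJ}$, using the Markov property, and using that a coalescing chain never leaves $D_0$, the hypothesis gives $\Pr\!\big(\tau>(m{+}1)J\mid\vec X_0=(x_1,x_2)\big)\le(1-A_0)\,\Pr\!\big(\tau>mJ\mid\vec X_0=(x_1,x_2)\big)$, hence $\Pr(\tau>mJ\mid\vec X_0=(x_1,x_2))\le(1-A_0)^m$ for every $m\ge 0$ and every pair $(x_1,x_2)$. Now I bring in the coupling inequality: the law of $(X^{(1)}_t,X^{(2)}_t)$ started from $(x_1,x_2)$ lies in $\mathrm{Joint}(\vec\delta_{x_1}P^t,\vec\delta_{x_2}P^t)$, so Lemma~\ref{lem:jointbound} gives $\tfrac12\|\vec\delta_{x_1}P^t-\vec\delta_{x_2}P^t\|_1\le\Pr(\tau>t\mid\vec X_0=(x_1,x_2))$. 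Averaging the second argument against $\vec\pi$ via $\vec\pi=\vec\pi P^t=\sum_j\pi(j)\,\vec\delta_jP^t$ and using that $\|\cdot\|_1$ is a norm with $\sum_j\pi(j)=1$, we get $\tfrac12\|\vec\delta_{x_1}P^t-\vec\pi\|_1\le\max_j\tfrac12\|\vec\delta_{x_1}P^t-\vec\delta_jP^t\|_1\le\max_{j}\Pr(\tau>t\mid\vec X_0=(x_1,j))$, and therefore $d(t)\le\max_{x_1,x_2}\Pr(\tau>t\mid\vec X_0=(x_1,x_2))$. In particular $d(mJ)\le(1-A_0)^m$ for all $m$.

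Finally I would pass to the asymptotic rate. By \eqref{eq.limspecradius} the limit $\lambda^*(P)=\lim_{t\to\infty}d(t)^{1/t}$ exists, and evaluating it along $t=mJ$ gives $\lambda^*(P)\le(1-A_0)^{1/J}<1$. Since $\lambda^*(P)=\max\{\lambda_2,|\lambda_n|\}\ge\lambda_2$, this gives $\Gap(P)=1-\lambda_2\ge 1-\lambda^*(P)\ge 1-(1-A_0)^{1/J}\ge A_0/J$, where the last step is the elementary inequality \eqref{eq.rootineq} with $a=A_0\in(0,1)$ and $k=J$.

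The step I expect to require the most care is this last detour through $\lambda^*(P)$: the coupling controls $d(t)$, which a priori sees $\max\{\lambda_2,|\lambda_n|\}$ rather than $\lambda_2$ itself, so one must observe that the quantity it produces, $1-\lambda^*(P)$, is nonetheless a valid lower bound for the gap $1-\lambda_2$ because $\lambda^*(P)\ge\lambda_2$ always. (Periodicity causes no trouble here, since the coalescence hypothesis already forces $d(t)\to 0$, hence $|\lambda_n|<1$.) The reduction to a coalescing coupling in the first paragraph is routine but should be stated, as without it nothing in the hypothesis prevents the two coordinates from separating after they first meet.
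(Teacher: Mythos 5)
Your proof is correct and follows essentially the same route as the paper's: iterate the hypothesis to get $\Pr(\tau>mJ)\le(1-A_0)^m$, convert this to a bound on total variation distance via Lemma~\ref{lem:jointbound}, pass to the asymptotic rate with \eqref{eq.limspecradius}, and finish with \eqref{eq.rootineq}. The only difference is that you make explicit the reduction to a coalescing coupling (and the bound of $d(t)$ via pairs of point masses averaged against $\vec\pi$), whereas the paper uses the inequality $\Pr(X^{(1)}_t\neq X^{(2)}_t)\le\Pr(\tau>t)$ directly, implicitly relying on the fact that the couplings it applies the lemma to stay together after meeting; your extra step is a harmless (indeed slightly more careful) refinement, not a different method.
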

	\begin{proof}[Proof of Lemma \ref{lem:taubound}]
		Recall the coalescing time $\tau=\min\{t\geq 0: \vec{X}_t\in D_0\}$.
		Then our hypothesis says that 
		\[ \Pr\left(\tau>t+J\, \left|\,\tau>t,\vec{X}_t=(x_1,x_2)\right.\right) \;\leq \; 1-A_0
		\hspace{7mm}\hbox{for every $t\geq 0,\; (x_1,x_2)\not\in D_0$}.
		\]
		This implies that 
		\[   \Pr\left(\tau>Js\,\left|\, \vec{X}_0=(x_1,x_2)\right.\right)  \;\leq\;
		(1-A_0)^s     \hspace{5mm}\hbox{for every positive integer $s$.}
		\]
		
		For any choice of initial distribution $\vec{\mu}$, consider a coupling in which the initial distributions of $X_0^{(1)}$ and $X_0^{(2)}$ are $\vec{\mu}$ and $\vec{\pi}$ respectively.
		Let $s=\lfloor t/J\rfloor$. 
		Then Lemma \ref{lem:jointbound} implies that 
		\begin{eqnarray*}  
			\frac{1}{2}\|  \vec{\mu}\,{P}^t \,-\, \vec{\pi}\|_1  \;=\;  \frac{1}{2}\|  \vec{\mu}\,{P}^t \,-\, \vec{\pi}\,{P}^t\|_1   & \leq &   \Pr\left(X^{(1)}_t \neq X^{(2)}_t \right)
			\\
			& \leq & \Pr(\tau>t)
			\\
			& \leq & \Pr(\tau>sJ)
			\\
			& \leq & \left( \left(1-A_0 \right)^{\frac{1}{J}}\,\right)^{sJ}.  
		\end{eqnarray*}
		Now take $t^{th}$ roots, and let $t\rightarrow \infty$. By (\ref{eq.limspecradius}),  we obtain
		$\lambda_2({P})   \,  \leq \,   \left(1-A_0 \right)^{1/J}.$
		Using this inequality and Equation (\ref{eq.rootineq}), we find 
		\[   1-\lambda_2(P) \;\geq \; 1 \,-\,(1-A_0)^{1/J} \;\geq \; \frac{A_0}{J}.
		\]
		This proves the lemma.
	\end{proof}
	
	
	\begin{proof}[Proof of Theorem \ref{Thm:lower bound for the gap}]
		We consider the Type 1 (or Type 2) coupling $\{(X_t^{(1)},X_t^{(2)})\}_{t\geq 0}$ for $G$ with respect to $D$. Let $(x_1,x_2)\not\in D$. Then
		\begin{align*}
		&\Pr\left(\left.X_{t+k+1}^{(1)}=X_{t+k+1}^{(2)}\,\right|\,(X_t^{(1)},X_t^{(2)})=(x_1,x_2) \right) \\
		\geq &\; \sum_{(w_1,w_2)\in D\cup D_0} \mathbf{P}^{k}((x_1,x_2),(w_1,w_2)) \, \Pr\left(\left. X_{t+k+1}^{(1)}=X_{t+k+1}^{(2)}\,\right|\,(X_{t+k}^{(1)},X_{t+k}^{(2)})=(w_1,w_2) \right) \\
		&\hspace{15mm}\left(\hbox{recall $\mathbf{P}^{k}((x_1,x_2),(w_1,w_2))\,=\, \Pr \left(\left. (X_{t+k}^{(1)},X_{t+k}^{(2)})=(w_1,w_2)\,\right|\,(X_t^{(1)},X_t^{(2)})=(x_1,x_2) \right)$}\right)\\ 
		\geq &\; \sum_{(w_1,w_2)\in D\cup D_0} \textbf{P}^{k}((x_1,x_2),(w_1,w_2)) \,\frac{C^*}{2(n-1)}
		\hspace{15mm}\hbox{(by Equations (\ref{Eqn:2-coupling-1}), (\ref{Eqn:2-coupling-3}), (\ref{Eqn:2-m-coupling-1}), and (\ref{Eqn:2-m-coupling-3}))}
		\\
		= & \; \frac{C^*}{2(n-1)}\Pr\left(\left. (X_{t+k}^{(1)},X_{t+k}^{(2)})\in D\cup D_0\,\right|\,(X_t^{(1)},X_t^{(2)})=(x_1,x_2) \right)\\
		\geq & \; \frac{C^*A}{2(n-1)}.
		\end{align*} 
		
		Now apply Lemma \ref{lem:taubound} with $P$ replaced by $\frak{P}$, $J=k+1$, and $A_0=C^*A/2(n-1)$.  We conclude that 
		\[    \mbox{Gap}(\frak{P})\,\geq \,\frac{C^*A}{2(n-1)(k+1)}\,.
		\]
		Finally, Equation (\ref{eq.lazygap}) completes the proof by removing the factor of 2 from the denominator.
	\end{proof}

	\section{First results using Type 1 coupling}\label{sec3}
	As a first application of the coupling methodology, we prove the following.
	
	The \textit{join} of two disjoint graphs $G_1$ and $G_2$ is the graph obtained by adding all possible edges between $G_1$ and $G_2$. We note that a graph $G$ is disconnected if and only if $\overline{G}$ is a join of some graphs.
	
	\begin{theorem}
		\label{thm.joingap}
		Let $G=G_1\vee G_2$ be the join of $G_1$ and $G_2$, where $G_i$ is a graph with $n_i$ vertices ($i=1,2$). Let $n=n_1+n_2$. Then
		\[   \Gap(G)  \;\geq \;   \frac{\min\{n_1,n_2\}}{32(n-1)} \,.
		\]
	\end{theorem}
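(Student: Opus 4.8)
The plan is to invoke Theorem~\ref{Thm:lower bound for the gap} with the set
\[
D \;=\; \{(v,w)\in V\times V : \{v,w\}\subseteq V(G_1)\}\;\cup\;\{(v,w)\in V\times V : \{v,w\}\subseteq V(G_2)\},
\]
i.e.\ the ordered pairs whose two vertices lie in a common part (this includes the diagonal $D_0$). Since $G=G_1\vee G_2$, every vertex of $G_i$ is adjacent to all of $V(G_{3-i})$, so any two vertices in the same part share that entire opposite part as common neighbours; hence every pair in $D$ has a common neighbour, the Type~1 coupling with respect to $D$ is well defined, and $C_{v,w}=|N(v)\cap N(w)|\ge n_2$ when $v,w\in V(G_1)$ while $C_{v,w}\ge n_1$ when $v,w\in V(G_2)$. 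Consequently $C^*:=\min\{C_{v,w}:(v,w)\in D\}\ge\min\{n_1,n_2\}$.

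Next I would check the one-step hypothesis of Theorem~\ref{Thm:lower bound for the gap} with $k=1$, i.e.\ bound below, uniformly over $(x_1,x_2)\notin D$, the probability that $(X^{(1)}_{t+1},X^{(2)}_{t+1})\in D\cup D_0$. A pair $(x_1,x_2)\notin D$ has its two vertices in different parts; say $x_1\in V(G_1)$ and $x_2\in V(G_2)$ (the other case follows by the symmetry of the coupling under interchanging the two coordinates). Since $(x_1,x_2)\notin D$ and $x_1\ne x_2$, rule~\eqref{Eqn:2-coupling-4} governs the transition, so $X^{(1)}_{t+1}$ and $X^{(2)}_{t+1}$ are chosen independently from $\mathfrak{P}(x_1,\cdot)$ and $\mathfrak{P}(x_2,\cdot)$. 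I would then consider the two \emph{disjoint} events $E_1=\{X^{(1)}_{t+1}=x_1,\ X^{(2)}_{t+1}\in V(G_1)\}$ and $E_2=\{X^{(2)}_{t+1}=x_2,\ X^{(1)}_{t+1}\in V(G_2)\}$. Using $\mathfrak{P}(x_1,x_1)=\mathfrak{P}(x_2,x_2)=\tfrac12$, the inclusions $V(G_1)\subseteq N(x_2)$ and $V(G_2)\subseteq N(x_1)$, and $\deg_G(x_i)\le n-1$, one obtains $\Pr(E_1)=\tfrac12\cdot\tfrac{n_1}{2\deg_G(x_2)}\ge\tfrac{n_1}{4(n-1)}$ and likewise $\Pr(E_2)\ge\tfrac{n_2}{4(n-1)}$, so $\Pr(E_1\cup E_2)\ge\tfrac{n}{4(n-1)}>\tfrac14$. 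On $E_1$ both components lie in $V(G_1)$ and on $E_2$ both lie in $V(G_2)$, so in either case $(X^{(1)}_{t+1},X^{(2)}_{t+1})\in D\cup D_0$; thus the hypothesis holds with $k=1$ and $A=\tfrac14$. Theorem~\ref{Thm:lower bound for the gap} then yields $\Gap(G)\ge\frac{C^*A}{(n-1)(k+1)}\ge\frac{\min\{n_1,n_2\}}{8(n-1)}$, which in fact beats the stated constant.

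The main point requiring care is getting a lower bound for $A$ that does not deteriorate when $G_1$ and $G_2$ have very different sizes: a naive estimate tracking only ``both move into the larger part'' would produce a factor like $n_i/n$. The fix is to let whichever component already sits in a given part stay put (probability $\tfrac12$ under the lazy walk) while the other component jumps into that part (probability at least $\tfrac{n_i}{2(n-1)}$), and then to add the two disjoint contributions, which restores a universal constant regardless of how $n_1$ and $n_2$ compare. Everything else is routine: verifying $D\subseteq D_0\cup D_1\cup D_2$ (immediate since $\operatorname{diam}(G_1\vee G_2)\le 2$), and confirming that $G$ is connected so that Theorem~\ref{Thm:lower bound for the gap} applies.
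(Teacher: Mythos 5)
Your proposal is correct, and it runs on the same engine as the paper's proof: the Type~1 coupling together with Theorem~\ref{Thm:lower bound for the gap} applied with $k=1$. The difference is in the choice of $D$ and the estimate of $A$. The paper assumes $n_1\le n_2$, takes $D=V_2\times V_2$, and bounds the chance that each component independently ends up in $V_2$ by $\tfrac14$ (using $n_2\ge n/2$), giving $A=\tfrac1{16}$ and hence the constant $\tfrac1{32}$. You instead take $D$ to be all ordered pairs lying in a common part and lower-bound the one-step probability of entering $D$ by the two disjoint events ``the component already in $V_1$ stays put (laziness) while the other jumps into $V_1$,'' and symmetrically for $V_2$; adding these gives $A>\tfrac14$ uniformly in $n_1,n_2$, which is exactly the refinement needed to avoid the $n_i/n$ loss, and it yields the sharper bound $\min\{n_1,n_2\}/(8(n-1))$, which implies the stated inequality. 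Two minor points, neither a gap: your $D$ contains diagonal pairs, which Theorem~\ref{Thm:lower bound for the gap} explicitly allows and which does not hurt $C^*$ since $C_{v,v}=\deg(v)\ge\min\{n_1,n_2\}$ in a join; and the degenerate case $\min\{n_1,n_2\}=0$ makes the claimed bound vacuous, so connectivity and the common-neighbour requirement are only needed when both parts are nonempty, exactly as in the paper.
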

	\begin{proof}
		To prove the theorem, we shall assume without loss of generality that $n_1\leq n_2$.
		We shall write $V_i$ for $V(G_i)$ ($i=1,2$). Let $\{(X_t^{(1)},X_t^{(2)})\}_{t\geq 0}$ be the Type 1 coupling for $G$ with respect to $V_2\times V_2$. 
		
		We claim 
		\begin{equation}
		\label{eq.joinV2bound2}
		\Pr\left(\left. (X_{t+1}^{(1)},X_{t+1}^{(2)})\in V_2\times V_2\,\right|\,(X_t^{(1)},X_t^{(2)})=(w_1,w_2) \right) \;\geq \; \frac{1}{16}  
		\hspace{6mm}\hbox{for all }(w_1,w_2) \not \in V_2\times V_2.
		\end{equation} 
		To prove this claim, recall Equation~\eqref{Eqn:2-coupling}. 
		If $X_t^{(i)}\in V_2$, then the probability that $X^{(i)}_{t+1}$ stays in $V_2$ is at least $1/2$ (by the lazy chain property). And if $X_t^{(i)}\in V_1$, then the probability that $X^{(i)}_{t+1}\,\in\,V_2$ is at least $\frac{1}{2}\,\frac{n_2}{n} \,\geq\, \frac{1}{4}$
		(recall that $n_1\leq n_2$).
		Equation~(\ref{eq.joinV2bound2}) follows. Since any two vertices in $V_2$ have at least $n_1$ common neighbours, the conclusion follows from Theorem~\ref{Thm:lower bound for the gap}.
	\end{proof}
	
	\begin{remark}  \label{remark:join}
		It is found in \cite[Theorem 4.1]{kim2023bounds} that 
		$\mathcal{K}(J)\leq 3n$ for any join graph $J$, and hence Equation (\ref{eq.kemenygap}) implies that $\Gap(J)\geq \frac{1}{3n}$.  This bound is better than Theorem \ref{thm.joingap} only when $n_1$ is small.  Conversely, Theorem \ref{thm.joingap} never yields an improved bound on $\mathcal{K}(J)$ via Equation (\ref{eq.kemenygap}).
	\end{remark}
	
	\begin{theorem}
		\label{thm.largedegree}
		Let $G$ be a connected graph with $\Delta(G) = n-O(1)$. Then,  $\Gap(G) = \Omega\left(\frac{1}{n}\right)$.
	\end{theorem}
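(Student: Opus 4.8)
The plan is to apply Theorem~\ref{Thm:lower bound for the gap} with a Type~1 coupling and the set $D=W\times W$, where $W=N(v)$ for a vertex $v$ with $\deg_G(v)=n-c$ and $c=O(1)$. Every pair in $W\times W$ has $v$ as a common neighbour, so $C^*\ge 1$; hence it suffices to produce a constant $k$ and a constant $A>0$ (depending only on $c$) such that, for every $(x_1,x_2)\notin D$, the coupling lies in $D\cup D_0$ at time $k$ with probability at least $A$. Writing $B=V\setminus W$, we have $|B|=c$, so $W$ contains all but $O(1)$ of the vertices and the walk should reach it quickly from anywhere. (When $c=1$ the vertex $v$ is adjacent to everything, so $G$ is a join and Theorem~\ref{thm.joingap} already applies; the point is to handle general bounded $c$.)

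The main step is an escape estimate: there exist constants $k$ and $q_0>0$, depending only on $c$, such that the lazy walk started at any vertex lies in $W$ after $k$ steps with probability at least $q_0$. First I would note that every vertex is within distance $c$ of $W$: on a shortest path from $x\in B$ to $W$ all vertices but the last lie in $B$, so its length is at most $|B|=c$. Next I would establish a one-step progress bound: if $z\in B$ with $\min_{w\in W}\dist_G(z,w)=d\ge 1$ and $z'$ is the vertex after $z$ on a fixed such geodesic, then, because $z$ has at most $c-1$ neighbours in $B$, a single lazy step from $z$ lands in $W\cup\{z'\}$ with probability at least $\frac{1}{4c}$ (treating the cases $\deg_G(z)\ge c$ and $\deg_G(z)<c$ separately, using in the latter that $z'\in N_G(z)$). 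Iterating this at most $c$ times drives the walk into $W$, after which laziness lets it remain there for the remaining steps, so one may take $k=c$ and $q_0=(8c)^{-c}$.

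With the escape estimate in hand, for $(x_1,x_2)\notin D$ with $x_1\ne x_2$ I would run the Type~1 coupling and exploit that, as long as the pair avoids $D\cup D_0$, its two coordinates move as independent lazy walks (Equation~\eqref{Eqn:2-coupling-4}). Coupling each coordinate to a standalone independent lazy walk and applying the escape estimate to each, with probability at least $q_0^2$ both standalone walks lie in $W$ at time $c$; on that event the coupled pair has entered $D\cup D_0$ by time $c$ (it either coalesced earlier, or at time $c$ both coordinates lie in $W$, i.e.\ the pair is in $D$). Finally, a short computation from Equations~\eqref{Eqn:2-coupling-1}--\eqref{Eqn:2-coupling-3} shows that from any state of $D\cup D_0$ the pair stays in $D\cup D_0$ after one step with probability at least $\frac18$ (from $D_0$ it stays there forever; from $(w_1,w_2)\in D\setminus D_0$ it reproduces $(w_1,w_2)$ with probability $\bigl(1-\frac{C_{w_1,w_2}}{2(n-1)}\bigr)Q_{w_2}(w_1,w_1)Q_{w_1}(w_2,w_2)>\frac18$, since $C_{w_1,w_2}\le n-2$). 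Combining these facts with the Markov property yields the hypothesis of Theorem~\ref{Thm:lower bound for the gap} with $k=c$ and $A=q_0^2\,8^{-c}$, and that theorem then gives $\Gap(G)\ge \frac{A}{(n-1)(c+1)}=\Omega(1/n)$. I expect the escape estimate to be the crux — controlling one-step progress uniformly over low- and high-degree vertices of $B$ — together with the mild bookkeeping needed to upgrade ``hits $D\cup D_0$ within $k$ steps'' to ``lies in $D\cup D_0$ at time exactly $k$'', which is the precise form Theorem~\ref{Thm:lower bound for the gap} requires.
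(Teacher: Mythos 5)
Your proposal is correct and follows essentially the same route as the paper's proof: a Type 1 coupling with respect to $N(v)\times N(v)$ for a maximum-degree vertex $v$, a constant-step escape estimate into $N(v)$ exploiting that $V\setminus N(v)$ has only $O(1)$ vertices (shortest paths inside that set, bounded degrees there, and laziness), and then Theorem~\ref{Thm:lower bound for the gap} with $C^*\ge 1$. The differences are only in bookkeeping: the paper pads each coordinate's trajectory with lazy holds so it lies in $N(v)$ at time exactly $K$, whereas you compare the pair to independent walks until it first enters $D\cup D_0$ and then retain membership with the per-step $1/8$ bound, which if anything treats the dependence between the two coordinates slightly more carefully.
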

	\begin{proof}
		Suppose that $\Delta(G)\geq n-K$ for some constant $K>0$ (independent of $n$).	Let $v$ be a vertex with the maximum degree, and let 
		$\{(X_t^{(1)},X_t^{(2)})\}_{t\geq 0}$ be the Markov chain of the Type 1 coupling  with respect to $N(v)\times N(v)$. From Theorem~\ref{Thm:lower bound for the gap}, it suffices to show that 
		there exist $k>0$ and $A>0$ (which can depend on $K$ but not on $n$) such that
		$$\Pr\left(\left. (X_{t+k}^{(1)},X_{t+k}^{(2)})\in \left(N(v){\times} N(v)\right)\cup D_0 \,\right|\,  (X_{t}^{(1)},X_{t}^{(2)})=(x_1,x_2) \right)\;\geq\; A\quad\text{for all $(x_1,x_2)\not\in  N(v){\times} N(v)$.}$$

		Let $U = V(G)\backslash (N(v)\cup \{v\})$. 
		Then $U$ has at most $K$ vertices. Choose a vertex $u$ in $U$. Let $u_0-u_1-\cdots-u_p$ be a shortest path from $u=u_0$ to a vertex in $N(v)$. Here $u_p\in N(v)$. Note that $p \leq K$. For $j\leq p-2$, since the path is shortest, $u_j$ is in $U$ and  has no neighbours in $N(v)$. Since $\deg(u_j)<K$, 
		we have $\frak{P}(u_j,u_{j+1})>\frac{1}{2K}$ for $0\leq j\leq p-2$. Let $r$ be the number of neighbours of $u_{p-1}$ in $N(v)$. Then the probability that a step from $u_{p-1}$ goes to $N(v)$ is at least $r/2(r+K-1)$, which is greater than or equal to $1/2K$. Therefore 
		\[  \Pr\left(\left. X_{t+p}^{(i)}\in N(v) \,\right|\, X_{t}=u\right)  \;>\; \left(\frac{1}{2K}\right)^p.
		\]
		And since  $\frak{P}(u,u)=\frac{1}{2}\geq \frac{1}{2K}$, we can conclude that
		$$\Pr\left(\left. X_{t+K}^{(i)}\in N(v) \,\right|\,  X_{t}^{(i)}=u\right)\;>\; \left(\frac{1}{2K}\right)^K\quad\quad\text{for every $u\in U$ and $i=1,2$}.$$
		Also, for $x\in N(v)$, we have
		$$\Pr\left(X^{(i)}_{t+s}=x \hbox{ for }s=1,\ldots,K\,\left|\, X^{(i)}_t=x\right. \right) \;=\;  \frac{1}{2^K} \;\geq \; \frac{1}{(2K)^K}  $$
		and
		$$\Pr\left( X^{(i)}_{t+s}=v \hbox{ for }s=1,\ldots,K-1,\, X^{(i)}_{t+K}\in N(v)\,\left|\, X^{(i)}_t=v\right. \right) \;=\; \frac{1}{2^K} \;\geq \; \frac{1}{(2K)^K}.  $$
		Now we may use $A = \left(\frac{1}{2K}\right)^{2K}$ and $k=K$. This completes the proof.
	\end{proof}

	We observe that we are now able to prove the following limited Nordhaus-Gaddum result.  This result will be superseded by Theorem \ref{thm:main1}, but it is of interest to note that it is an easy consequence of what we have done so far. 
	
	\begin{corollary}
		\label{cor.largedegree}
		Let $G$ be a graph with $\Delta(G) = n-O(1)$ or $\delta(G)=O(1)$. Then  $\max\{\Gap(G),\Gap(\overline{G})\} = \Omega\left(\frac{1}{n}\right)$.
	\end{corollary}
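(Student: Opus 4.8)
The plan is to reduce the statement to the two results already established in this section, Theorems~\ref{thm.joingap} and~\ref{thm.largedegree}. The first step is to notice that the hypothesis is self-dual under complementation: since $\deg_{\overline G}(v) = (n-1) - \deg_G(v)$ for every vertex $v$, the condition $\delta(G) = O(1)$ is exactly the condition $\Delta(\overline G) = n - O(1)$. Because $\max\{\Gap(G),\Gap(\overline G)\}$ is symmetric in $G$ and $\overline G$, it therefore suffices to prove the corollary under the single assumption $\Delta(G) = n - O(1)$.

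The second step is to split on whether $G$ is connected. If $G$ is connected, Theorem~\ref{thm.largedegree} applies directly and gives $\Gap(G) = \Omega(1/n)$, hence $\max\{\Gap(G),\Gap(\overline G)\} = \Omega(1/n)$; here the degree hypothesis is doing all the work. If $G$ is disconnected, then $\Gap(G) = 0$, so instead I would bound $\Gap(\overline G)$ from below, and the point is that disconnectedness of $G$ makes $\overline G$ a join, so the degree hypothesis is not even needed in this case. Concretely, pick any connected component $C$ of $G$, set $n_1 = |V(C)|$ and $n_2 = n - n_1$ (both at least $1$); since $G$ has no edges between $V(C)$ and its complement, $\overline G$ is the join of the two subgraphs of $\overline G$ induced on $V(C)$ and on $V(G)\setminus V(C)$. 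Theorem~\ref{thm.joingap} then yields
\[ \Gap(\overline G) \;\geq\; \frac{\min\{n_1,n_2\}}{32(n-1)} \;\geq\; \frac{1}{32(n-1)} \;=\; \Omega\!\left(\tfrac1n\right), \]
which completes this case and hence the corollary.

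I do not expect any genuine obstacle: the corollary is essentially a bookkeeping combination of Theorems~\ref{thm.joingap} and~\ref{thm.largedegree}, and the only mildly non-obvious observations are that the hypothesis is invariant under complementation and that the potentially troublesome disconnected case is automatically absorbed by the join bound with the crude estimate $\min\{n_1,n_2\}\geq 1$. The one thing worth checking carefully is that the $O(1)$ bookkeeping survives complementation, which it does since $n - \Delta(\overline G) = 1 + \delta(G)$ is bounded exactly when $\delta(G)$ is bounded.
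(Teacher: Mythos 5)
Your proof is correct and follows essentially the same route as the paper: the disconnected case is absorbed by the join bound of Theorem~\ref{thm.joingap}, and the connected case is handled by Theorem~\ref{thm.largedegree} together with the identity $\Delta(\overline{G})=n-1-\delta(G)$. The only (harmless) organizational difference is that you invoke the complementation symmetry up front, so you never need to discuss connectedness of $\overline{G}$, whereas the paper disposes of disconnectedness of both $G$ and $\overline{G}$ before splitting on the two degree hypotheses.
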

	\begin{proof}
		If $G$ is disconnected, then $\overline{G}$ is a join graph, and the result follows from Theorem \ref{thm.joingap}.  Similarly the result holds if $\overline{G}$ is disconnected.  So we can assume that both $G$ and $\overline{G}$ are connected.  If $\Delta(G)=n-O(1)$, then the result follows directly from Theorem \ref{thm.largedegree}.  Finally, if $\delta(G)=O(1)$, then we apply Theorem \ref{thm.largedegree} to $\overline{G}$, using $\Delta(\overline{G})=n-1-\delta(G)$.  
	\end{proof}
	
	\section{Proof of Theorem~\ref{thm:main1}}\label{sec4}
	In this section, we prove Theorem~\ref{thm:main1}, leveraging the graph's diameter. Let $G$ be a graph on $n$ vertices. If $\diam(G)=1$, then $G$ is the complete graph and 
	$$\Gap(G) = 1+\frac{1}{n-1}>\frac{3-\sqrt{5}}{8}\frac{1}{n-1}.$$
	We note a basic fact in graph theory that the complement of a graph with diameter at least $4$ has diameter $2$, and the complement of a graph with diameter $3$ has diameter at most $3$. Hence it suffices to show that $\Gap(G)\geq \frac{3-\sqrt{5}}{8}\frac{1}{n-1}$ for two cases: (i)~$\diam(G)=2$ and (ii)~$\diam(G)=\diam(\overline{G})=3$. Propositions~\ref{diameter 2} and~\ref{diameter 3} below establish the former and latter case, respectively. 
	
	
	Recall that given a connected graph $G$, $D_i$ is the set of all pairs of vertices whose distance from each other is $i$, and $C_{v,w}=|N_G(v)\cap N_G(w)|$. 
	\begin{proposition}\label{diameter 2}
		Let $G$ be a connected graph of diameter $2$ on $n$ vertices. 
		Let $C^*=\min\{C_{v,w}:(v,w)\in D_2\}$.  Then
		$$\Gap(G)\;\geq \; \frac{min\{C^*,2\} }{n-1}.
		$$
	\end{proposition}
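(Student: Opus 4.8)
The plan is to run the \emph{Type 2} coupling for $G$ with respect to $D=D_2$ and to show that, because $\diam(G)=2$, from \emph{every} uncoalesced state the two walks collide in a single step with probability at least $\frac{\min\{C^*,2\}}{2(n-1)}$; the conclusion then drops out of Lemma~\ref{lem:taubound} with $J=1$ together with Equation~\eqref{eq.lazygap}. First one checks that $D_2$ is an admissible choice for $D$: any two vertices at distance $2$ lie on a common path of length $2$ and hence have a common neighbour, so in particular $C_{v,w}\geq 1$ for every $(v,w)\in D_2$ and $C^*\geq 1$. Write $\{(X_t^{(1)},X_t^{(2)})\}_{t\geq 0}$ for this Type 2 coupling; each coordinate is a lazy walk with matrix $\mathfrak{P}$, and a diameter-$2$ graph has $n\geq 3$ vertices.

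The heart of the argument is the one-step estimate
\[
\Pr\left(\left.X_{t+1}^{(1)}=X_{t+1}^{(2)}\,\right|\,(X_t^{(1)},X_t^{(2)})=(x_1,x_2)\right)\;\geq\;\frac{\min\{C^*,2\}}{2(n-1)}\qquad\text{for every }(x_1,x_2)\notin D_0.
\]
Since $\diam(G)=2$, any such pair $(x_1,x_2)$ lies in $D_1$ or in $D_2$, and I would treat the two cases separately. If $(x_1,x_2)\in D_2=D\setminus D_0$, the coupling uses the common-neighbour mechanism: by~\eqref{Eqn:2-m-coupling-3}, for each $v\in N(x_1)\cap N(x_2)$ the transition to $(v,v)$ has probability at least $\frac{C_{x_1,x_2}}{2(n-1)}U_{x_1,x_2}(v)=\frac{1}{2(n-1)}$, so summing over the $C_{x_1,x_2}$ such $v$ gives collision probability at least $\frac{C_{x_1,x_2}}{2(n-1)}\geq\frac{C^*}{2(n-1)}$. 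If $(x_1,x_2)\in D_1$, then $(x_1,x_2)\notin D_0\cup D$, so~\eqref{Eqn:2-m-coupling-4} governs the step: exactly one coordinate takes a non-lazy move while the other holds still, and the only coalesced states $(v,v)$ reachable with positive probability are $v=x_2$ (coordinate~$1$ steps $x_1\to x_2$) and $v=x_1$ (coordinate~$2$ steps $x_2\to x_1$), which together carry probability $\tfrac12 P(x_1,x_2)+\tfrac12 P(x_2,x_1)=\frac{1}{2\deg(x_1)}+\frac{1}{2\deg(x_2)}\geq\frac{1}{n-1}$ since $\deg(x_i)\leq n-1$. In both cases the bound is at least $\frac{\min\{C^*,2\}}{2(n-1)}$, because $\min\{C^*,2\}\leq C^*$ and $\min\{C^*,2\}\leq 2$.

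Granting the displayed estimate, I would apply Lemma~\ref{lem:taubound} with its matrix $P$ taken to be $\mathfrak{P}$, with $J=1$, and with $A_0=\frac{\min\{C^*,2\}}{2(n-1)}$ (which is at most $\frac{1}{n-1}<1$ because $n\geq 3$); this yields $\Gap(\mathfrak{P})\geq\frac{\min\{C^*,2\}}{2(n-1)}$, and Equation~\eqref{eq.lazygap} then removes the factor $2$ from the denominator to give $\Gap(G)\geq\frac{\min\{C^*,2\}}{n-1}$.

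I expect the only subtle design choice to be using the \emph{Type 2} rather than the \emph{Type 1} coupling for the $D_1$ pairs. In the Type 1 coupling the two coordinates move independently out of a $D_1$ pair, so with probability $\tfrac14$ both sit still and the pair stays in $D_1$; this wastes a factor of $2$ and only proves the weaker bound $\frac{1}{n-1}$. The Type 2 coupling is arranged so that one coordinate always moves, which is precisely what produces the clean collision probability $\tfrac12 P(x_1,x_2)+\tfrac12 P(x_2,x_1)$ and the full $\min\{C^*,2\}$. A second, milder point is that diameter $2$ forces every uncoalesced pair into $D_1\cup D_2$; this is what lets the hypothesis of Lemma~\ref{lem:taubound} be verified with $J=1$, rather than appealing to the $D\cup D_0$-return form of Theorem~\ref{Thm:lower bound for the gap}, which would cost additional constant factors.
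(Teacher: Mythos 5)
Your proposal is correct and follows essentially the same route as the paper: the Type 2 coupling with respect to $D_2$, a one-step coalescence bound split into the adjacent case (via the move/stay rule \eqref{Eqn:2-m-coupling-4}, giving $\tfrac12 P(x_1,x_2)+\tfrac12 P(x_2,x_1)\geq \tfrac{1}{n-1}$) and the distance-$2$ case (via the common-neighbour term in \eqref{Eqn:2-m-coupling-3}, giving $\tfrac{C^*}{2(n-1)}$), followed by Lemma~\ref{lem:taubound} with $J=1$ and Equation~\eqref{eq.lazygap}. Your closing observation about why Type 2 (rather than Type 1) is needed to capture the full $\min\{C^*,2\}$ matches the role this coupling plays in the paper.
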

	\begin{proof}
		Consider the Type 2 coupling of Equation (\ref{Eqn:2-m-coupling}) $\{(X_t^{(1)},X_t^{(2)})\}_{t\geq 0}$ for $G$ with respect to $D_2$. Let $X_t^{(1)} = v$ and $X_t^{(2)} = w$ with $v\neq w$. 
		On the one hand, if $(v,w)\notin D_2\cup D_0$ (that is, $v$ is adjacent to $w$), then
		Equation (\ref{Eqn:2-m-coupling-4}) tells us that
		\begin{align*}
		\Pr(X_{t+1}^{(1)} = X_{t+1}^{(2)}\;|\;X_{t}^{(1)} = v,\;X_{t}^{(2)} = w)\;
		= &\; \frac{1}{2}\,P(v,w) \,+\, \frac{1}{2}\,P(w,v)
		\\
		=  &\; \frac{1}{2}\frac{1}{\deg(w)}+\frac{1}{2}\frac{1}{\deg(v)}
		\quad \geq \;\frac{1}{n-1}.
		\end{align*}  
		On the other hand, if $(v,w)\in D_2$, then 
		\[   \Pr(X_{t+1}^{(1)} = X_{t+1}^{(2)}\;|\;X_{t}^{(1)} = v,\;X_{t}^{(2)} = w)\;  \geq \;   \frac{C_{v,w}}{2(n-1)}   \;\geq \;
		\frac{C^*}{2(n-1)}\,.
		\]
		Using the above two inequalities in Lemma \ref{lem:taubound} with $J=1$,
		we see that Gap$(\frak{P})\geq \min\{C^*,2\}/(2(n-1))$.  The proposition 
		now follows from Equation (\ref{eq.lazygap}).
	\end{proof}
	
	\begin{remark}\label{rem:diam2}
		Consider the graph $G$ that consists of two cliques of equal size that have exactly one vertex in common. Then $\diam(G) = 2$ and $C^*=1$. Letting $n = |V(G)|$, by  \cite[Proposition~3.4]{faught2023nordhaus}, we obtain $$\Gap(G) = \frac{2}{n-1} \;>\; \frac{C^*}{n-1}.$$
		We see that there is a family of graphs whose spectral gap is greater than the lower bound in Proposition~\ref{diameter 2}.
	\end{remark}
	
	\begin{proposition}\label{diameter 3}
		Let $G$ be a connected graph with $\mathrm{diam}(G) = \mathrm{diam}(\overline{G}) =3$. Then
		$$\max\{\Gap(G), \Gap(\overline{G})\} \;\geq\;  \frac{3-\sqrt{5}}{8}\frac{1}{n-1}.
		$$   
	\end{proposition}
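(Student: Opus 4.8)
The plan is to obtain the bound from Theorem~\ref{Thm:lower bound for the gap}, applied either to $G$ or to $\overline{G}$ according to a dichotomy, with the constant $\tfrac{3-\sqrt5}{2}$ arising as the balance point of the two cases. Both $G$ and $\overline{G}$ have diameter $3$, so every pair of distinct vertices is at distance $1$, $2$, or $3$ in each graph, and since distance-$2$ pairs always have a common neighbour, the only obstruction to a Proposition~\ref{diameter 2}-style coupling comes from pairs at distance $1$ and $3$.

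First I would, for $G$, run the Type~2 coupling with respect to the set $D$ of all pairs at distance $2$ together with all edges of $G$ lying on a triangle; every member of $D$ has a common neighbour, so $C^*\geq 1$, and the pairs outside $D\cup D_0$ are exactly the triangle-free edges and the pairs in $D_3(G)$. For a triangle-free edge $\{v,w\}$, by~\eqref{Eqn:2-m-coupling-4} with probability $\tfrac12$ the coupling holds one component at $v$ and moves the other to a neighbour $w'$ of $w$; as $\{v,w\}$ lies on no triangle, either $w'=v$ or $\mathrm{dist}_G(v,w')=2$ with $w$ a common neighbour, so the coupling enters $D\cup D_0$ with probability at least $\tfrac12$. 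Thus only the pairs in $D_3(G)$ remain to be controlled, and the same reduction applies to $\overline{G}$.

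Next, fix $(v,w)\in D_3(G)$ and write $N_G(v)=A_v\sqcup B_v$, where $A_v$ (resp.\ $B_v$) is the set of neighbours of $v$ at distance $2$ (resp.\ $3$) from $w$; define $A_w,B_w$ symmetrically, and note that a shortest $v$--$w$ path forces $|A_v|,|A_w|\geq 1$. By~\eqref{Eqn:2-m-coupling-4}, from $(v,w)$ the coupling enters $D_2(G)\subseteq D$ in one step with probability at least $\tfrac12\bigl(\tfrac{|A_v|}{\deg_G(v)}+\tfrac{|A_w|}{\deg_G(w)}\bigr)$. Now set $\alpha=\tfrac{3-\sqrt5}{2}$, the root in $(0,1)$ of $\alpha=(1-\alpha)^2$. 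If every pair in $D_3(G)$ satisfies $\tfrac{|A_v|}{\deg_G(v)}+\tfrac{|A_w|}{\deg_G(w)}\geq\alpha$, then the hypothesis of Theorem~\ref{Thm:lower bound for the gap} holds with $k=1$, $A=\alpha/2$, and $C^*\geq 1$, giving $\Gap(G)\geq \tfrac{\alpha}{4(n-1)}=\tfrac{3-\sqrt5}{8}\cdot\tfrac1{n-1}$, as desired.

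In the complementary case there is $(v,w)\in D_3(G)$ with $\tfrac{|A_v|}{\deg_G(v)}+\tfrac{|A_w|}{\deg_G(w)}<\alpha$, hence $|B_v|>(1-\alpha)\deg_G(v)$ and $|B_w|>(1-\alpha)\deg_G(w)$. Since each vertex of $B_v$ is at $G$-distance $3$ from $w$ and each vertex of $B_w$ at $G$-distance $3$ from $v$, no vertex of $B_v$ is $G$-adjacent to any vertex of $B_w$, nor to the far endpoint; translating into $\overline{G}$, the disjoint sets $\{v\}\cup B_v$ and $\{w\}\cup B_w$ span a complete bipartite subgraph of $\overline{G}$. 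I would then run a coupling for $\overline{G}$ using (one side of) this near-join as the reservoir of common neighbours that defines $D$: a pair already inside the join coalesces at the good ``distance-$2$'' rate, while an arbitrary pair must first be driven into the join, the probability of which in $O(1)$ steps is governed by the fractions $|B_v|/\deg_G(v),\,|B_w|/\deg_G(w)>1-\alpha$ and is therefore $\gtrsim(1-\alpha)^2$; this should give $\Gap(\overline{G})\geq\tfrac{(1-\alpha)^2}{4(n-1)}=\tfrac{3-\sqrt5}{8}\cdot\tfrac1{n-1}$. The hard part is exactly this last step: making the ``drive into the join'' estimate uniform and independent of $n$, in particular dealing with endpoints $v$ or $w$ of small $G$-degree --- where $B_v$ or $B_w$ is small but $\overline{G}$ then has large degrees, so a separate degree-based argument along the lines of Theorem~\ref{thm.largedegree} applied to $\overline{G}$ ought to take over --- and verifying that the two case bounds truly meet at the root of $\alpha=(1-\alpha)^2$, which is where the factor $\tfrac{3-\sqrt5}{8}$ is forced.
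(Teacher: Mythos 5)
Your Case 1 (the ``good'' case) is essentially the paper's: the same Type 2 coupling, the same observation that an edge with no common neighbour enters $D\cup D_0$ in one step via Eq.~\eqref{Eqn:2-m-coupling-4}, and the same one-step estimate $\tfrac12\bigl(\tfrac{|A_v|}{\deg_G(v)}+\tfrac{|A_w|}{\deg_G(w)}\bigr)$ for a distance-$3$ pair, fed into Theorem~\ref{Thm:lower bound for the gap} with $k=1$, $C^*\geq 1$; your threshold $\alpha=\tfrac{3-\sqrt5}{2}$ is exactly twice the paper's $A$, and your balance equation $\alpha=(1-\alpha)^2$ is the same quadratic $\alpha^2-3\alpha+1=0$ that the paper's balancing produces. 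The structural observation in your bad case --- that $\{v\}\cup B_v$ and $\{w\}\cup B_w$ are disjoint and completely joined in $\overline{G}$ --- is also correct.

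The genuine gap is the step you yourself flag: driving an arbitrary pair of the $\overline{G}$-walk into this near-join. The fractions $|B_v|/\deg_G(v),\,|B_w|/\deg_G(w)>1-\alpha$ are fractions of the \emph{$G$-degrees of $v$ and $w$}; they do not bound any transition probability of the walk on $\overline{G}$. From a generic vertex $z$ the probability of stepping into $B_v\cup B_w\cup\{v,w\}$ is $|N_{\overline{G}}(z)\cap(B_v\cup B_w\cup\{v,w\})|/\deg_{\overline{G}}(z)$, which can be $O(\deg_G(w)/n)=o(1)$, and even inside the join the cross-step probability is only of order $|B_w|/n$, not $1-\alpha$; so the claimed ``$\gtrsim(1-\alpha)^2$ in $O(1)$ steps'' has no uniform justification, and with it the constant $\tfrac{3-\sqrt5}{8}$ evaporates. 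The proposed rescue via Theorem~\ref{thm.largedegree} applied to $\overline{G}$ only covers $\deg_G(v)=O(1)$ for a \emph{fixed} constant $K$ (and then yields a $K$-dependent constant $(2K)^{-2K}$, not $\tfrac{3-\sqrt5}{8}$); intermediate regimes such as $\deg_G(v)\sim\log n$ or $\sim\sqrt n$ are covered by neither half, so the dichotomy does not close. The paper avoids this entirely: it never tries to reach the set $B_v\cup B_w$. Instead it proves that every pair $(x,y)$ with $\dist_{\overline{G}}(x,y)=3$ must (up to swapping) lie in $A_v\times A_w$ (its $N_G^w(v)\times N_G^v(w)$), and then uses the \emph{smallness} of $|A_v|,|A_w|$ together with $N_G(x)\cup N_G(y)=V$ to show that at least a $1-\tfrac{2A}{1-2A}$ fraction of $N_{\overline{G}}(x)$ lies at $\overline{G}$-distance $2$ from $y$. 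This gives a uniform one-step probability of acquiring a common neighbour for \emph{all} pairs in $\overline{G}$, so the Case 1 argument applies verbatim to $\overline{G}$ with $\overline{D}$ the common-neighbour pairs and $C^*\geq1$, with no dependence on the sizes of $\deg_G(v),\deg_G(w)$. To repair your write-up you would need to replace the ``drive into the join'' step by an argument of this kind.
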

	\begin{proof}
		Let $A$ be a constant such that $0<A<1/4$, whose value we shall fix at the end of the proof. 
		Let $\{(X_t^{(1)},X_t^{(2)})\}_{t\geq 0}$ be the Markov chain of the Type 2 coupling for $G$ with respect to $D$, where $D$ is the set of all pairs of vertices with at least one common neighbour in $G$. Then $D_2\cup D_0\subseteq D$. 
		We consider two cases.
		
		\underline{Case 1}: Suppose 
		$$\Pr\left((X_{t+1}^{(1)},X_{t+1}^{(2)})\in D\;|\; X_t^{(1)} = v_1,X_t^{(2)} = v_2\right)\;>\;A\quad\text{for all $v_1$ and $v_2$ with $\mathrm{dist}_G(v_1,v_2) = 3$.}$$
		Consider $(v_1,v_2)\not \in D$ such that $\dist_G(v_1,v_2)\neq 3$.  Then $\dist_G(v_1,v_2)=1$ and $N_G(v_1)\cap N_G(v_2)=\emptyset$.
		If $X^{(1)}_t=v_1$ and $X^{(2)}_t=v_2$, then we see from Equation (\ref{Eqn:2-m-coupling-4}) that either $X^{(1)}_{t+1}=v_1$ and $X^{(2)}_{t+1}\in N_G(v_2)$, or else $X^{(1)}_{t+1}\in N_G(v_1)$ and $X^{(2)}_{t+1}=v_2$. In either case $(X^{(1)}_{t+1},X^{(2)}_{t+1})\in D$ because the components have either $v_2$ or $v_1$ as a common neighbour. 
		Then applying Theorem~\ref{Thm:lower bound for the gap} yields $$\Gap(G) \;\geq \;\frac{A}{2(n-1)}.$$ 
		
		\underline{Case 2}: Suppose Case 1 does not hold. Then there exist $v$ and $w$ with $\mathrm{dist}_G(v,w) =3$ such that $$\Pr\left((X_{t+1}^{(1)},X_{t+1}^{(2)})\in D\;|\; X_t^{(1)} = v,X_t^{(2)} = w\right)\;\leq \;A.$$ 
		Without loss of generality, assume $|N_G(v)|\leq |N_G(w)|$. Define $N_G^w(v)$ (resp. $N_G^v(w)$) to be the set of vertices $z$ in $N_G(v)$ (resp. $N_G(w)$) such that $\mathrm{dist}_G(z,w) = 2$ (resp. $\mathrm{dist}_G(z,v) = 2$). Since $D_2\subseteq D$, we now see from Equation (\ref{Eqn:2-m-coupling-4}) that
		$$\frac{|N_G^w(v)|}{2|N_G(v)|}+\frac{|N_G^v(w)|}{2|N_G(w)|}\;\leq\; \Pr\left((X_{t+1}^{(1)},X_{t+1}^{(2)})\in D_2\;|\; X_t^{(1)} = v,X_t^{(2)} = w\right)\;\leq \; A.$$
		
		Note that $\mathrm{dist}_{\overline{G}}(z_1,z_2) \geq 3$ if and only if $z_1$ and $z_2$ are adjacent in $G$ and each vertex $z$ is adjacent to $z_1$ or $z_2$ in $G$, equivalently, $N_G(z_1)\cup N_G(z_2) = V$. We claim that if $\mathrm{dist}_{\overline{G}}(x,y) = 3$ then $(x,y)$ or $(y,x)$ is in $N_G^w(v)\times N_G^v(w)$. Suppose that $\mathrm{dist}_{\overline{G}}(x,y) = 3$. Then  $\mathrm{dist}_{G}(x,y) = 1$ and $N_G(x)\cup N_G(y) = V$. If $v,w\in N_G(x)$ or $v,w\in N_G(y)$, then we would have $\mathrm{dist}_G(v,w) \leq 2$, which is a contradiction. Hence, one of $v$ and $w$ is in $N_G(x)\backslash N_G(y)$ and the other is in $N_G(y)\backslash N_G(x)$. Now, the claim is established.
		
		Consider $\overline{G}$, with diameter $3$. 
		Let $x$ and $y$ be two vertices such that $\mathrm{dist}_{\overline{G}}(x,y) = 3$.  From the claim above, one of $(x,y)$ or $(y,x)$ is in $N_G^w(v)\times N_G^v(w)$.  We shall first assume that $(x,y)$ is in this set, and return to the other possibility later.  Thus we are assuming that  
		$x\in N_G^w(v)$ and $y\in N_G^v(w)$.
		We notice that $N_{\overline{G}}(x) =N_G(y)\backslash (N_G(x)\cup\{x\})$ and $N_{\overline{G}}(y) =N_G(x)\backslash (N_G(y)\cup \{y\})$. 
		We claim that $N_{\overline{G}}(x)\backslash N_{\overline{G}}^y(x)$ is contained in $N_G^w(v)$. Pick $z\in N_{\overline{G}}(x)\backslash N_{\overline{G}}^y(x)$. For sake of contradiction, assume that $v$ is not adjacent to $z$ in $G$. Then $z$ is adjacent to $v$ in $\overline{G}$ so that there is a path $x-z-v-y$ in $\overline{G}$, that is, $\mathrm{dist}_{\overline{G}}(z,y)=2$. This is a contradiction to $z\notin N_{\overline{G}}^y(x)$. Hence, $z$ is adjacent to $v$ in $G$. Since $z\in N_G(y)$, there is a path $v-z-y-w$ in $G$. Therefore $z\in N_G^w(v)$, and we have proved that
		$$N_{\overline{G}}(x)\backslash N_{\overline{G}}^y(x)\subseteq N_G^w(v).$$ 
		
		Consider $z\in N_G(w)$. If $z\in N_G(x)$, then we have a path $w-z-x-v$ in $G$ and so $z\in N_G^v(w)$. Hence, $N_G(w)\cap N_G(x)\subseteq N_G^v(w)$. Since $N_G(x)\cup N_G(y) = V$ and $x\not\in N_G(w)$, we obtain $N_G(w)\backslash N_G^v(w)\subseteq N_G(y)\backslash N_G(x)$. Hence, $$N_G(w)\backslash N_G^v(w)\subseteq N_{\overline{G}}(x).$$

		Since $\frac{|N_G^w(v)|}{2|N_G(v)|}\leq A$, we have $|N_G^w(v)|\leq 2|N_G(v)|A$. Similarly, $|N_G^v(w)|\leq 2|N_G(w)|A$. We now see that \begin{align*}
		\frac{|N_{\overline{G}}(x)\backslash N_{\overline{G}}^y(x)|}{|N_{\overline{G}}(x)|}\;\leq\; 
		\frac{|N_G^w(v)|}{|N_G(w)|-|N_G^v(w)|} \;\leq\; \frac{|N_G^w(v)|}{(1-2A)|N_G(w)|}\;\leq \; \frac{|N_G^w(v)|}{(1-2A)|N_G(v)|} \;\leq \; \frac{2A}{1-2A}.
		\end{align*}
		Hence,
		$$\frac{|N_{\overline{G}}^y(x)|}{|N_{\overline{G}}(x)|}\;\geq \; 1-\frac{2A}{1-2A} \;>\; 0.$$
		
		Let $\{(\overline{X}_t^{(1)},\overline{X}_t^{(2)})\}_{t\geq 0}$ correspond to the Type 2 coupling for $\overline{G}$ with respect to $\overline{D}$, where $\overline{D}$ is the set of all pairs of vertices with at least one common neighbour in $\overline{G}$. Let $\overline{D}_2$ be the set of all pairs of vertices whose distance is $2$ in $\overline{G}$. Then $\overline{D}_2\subseteq \overline{D}$, and 
		\begin{align}
		\nonumber
		\Pr& \left(\left.\left(\overline{X}_{t+1}^{(1)},\overline{X}_{t+1}^{(2)}\right)\in  \overline{D}\;\right|\; \overline{X}_t^{(1)} = x,\overline{X}_t^{(2)} = y\right)\\
		\label{eq.PrdistA}
		& \geq\; \Pr\left(\left.\left(\overline{X}_{t+1}^{(1)},\overline{X}_{t+1}^{(2)}\right)\in \overline{D}_2\;\right|\; \overline{X}_t^{(1)} = x,\overline{X}_t^{(2)} = y\right)\;\geq \;\frac{1}{2}\left(1-\frac{2A}{1-2A}\right).
		\end{align}
		We have shown that Equation (\ref{eq.PrdistA}) holds for every $x$ and $y$ with $\mathrm{dist}_{\overline{G}}(x,y)=3$ such that $(x,y)\in N_G^w(v)\times N_G^v(w)$.
		Since the probabilities in Equation (\ref{eq.PrdistA}) do not change if $x$ and $y$ are interchanged, the inequality also holds if $\mathrm{dist}_{\overline{G}}(x,y)=3$ and $(y,x)\in N_G^w(v)\times N_G^v(w)$. As noted above, this proves that Equation (\ref{eq.PrdistA}) holds for every $(x,y)$ such that $\mathrm{dist}_{\overline{G}}(x,y)=3$.
		Now, using the argument of Case 1, we deduce that 
		$$\Gap(\overline{G}) \;\geq \;\left(\frac{1}{4}-\frac{A}{2(1-2A)}\right)\frac{1}{n-1}$$
		in Case 2.
		
		Finally, combining the two cases and choosing $A$ to satisfy $\frac{1}{4}-\frac{A}{2(1-2A)} = \frac{A}{2}$, i.e.\ $A=(3-\sqrt{5})/4$, yields the conclusion of the proposition.
	\end{proof}

	\section{Proof of Theorem~\ref{thm.complementLU}}\label{sec5}
	
	In this section, we examine Nordhaus--Gaddum bounds when vertex degrees are $\Omega(n)$ in both $G$ and $\overline{G}$. We begin with introducing the relation between the spectral gap and the so-called \textit{bottleneck ratio}.


	
	
	%
	%
	
	When $S$ and $T$ are disjoint subsets of $V$, we define $[S,T]_G$ to be the set of all edges of $G$ that have one endpoint in $S$ and one endpoint in $T$. For $S\subseteq V$, let 
	\[    {\rm vol}(S)\;:=\;  \sum_{v\in S} \deg_G(v) \,.
	\]
	Note that ${\rm vol}(V)\,=\,2|E(G)|$.  For example, in a regular
	graph of degree $d$, ${\rm vol}(S)=d|S|$.
	The \textit{bottleneck ratio} of the graph $G$ is  defined to be
	\begin{equation}
	\label{eq.defPhi}
	\Phi  \;=\;  \Phi(G)  \;=\;  \min_{S\subseteq V:  \,0<{\rm vol}(S)\leq |E(G)|} \frac{  |\,[S,S^c]_G|}{{\rm vol}(S)}  \,.
	\end{equation}
	The classic work of Jerrum and Sinclair \cite{JeSi} and Lawler and Sokal \cite{LaSo} proved 
	\begin{equation}
	\label{eq.jerrum}
	\frac{\Phi^2}{2}  \;\leq \;  1-\lambda_2  \;\leq \;  2\Phi.
	\end{equation}
	(Note that if $G$ is disconnected, then $\lambda_2=1$ and $\Phi=0$.)
	For an overview from the point of view of discrete reversible Markov chains, of which our
	context is a special case,
	see  \cite[Sections~7.2 and~13.3]{LPW}.
	


	Here we restate Theorem~\ref{thm.complementLU}.
	
	\noindent\textbf{Theorem~\ref{thm.complementLU}} \textit{Let $L$ and $U$ be real constants such that $0<L< U<1$, and let $\Gamma_{L,U}=L^4(1-U)^4/2^{13}$. Then for every integer $n\geq \frac{2}{1-U}$ and every
		graph $G$ on $n$ vertices satisfying $Ln\leq \delta(G)\leq \Delta(G)\leq Un$,
		\[    \max\left\{  \Gap(G), \, \Gap(\overline{G})\right\}   \;\geq \; \Gamma_{L,U}.
		\]}
	
	
	\bigskip
	
	\noindent\textbf{Proof of Theorem \ref{thm.complementLU}:}
	We first consider the case that $G$ is disconnected.  Let $C$ be a connected component of $G$ with fewest vertices.  Then $\frac{n}{2}\geq |V(C)|>\delta(G)\geq Ln$.  Therefore $\overline{G}$ can be expressed as a join of two graphs that each have at least $Ln$ vertices.  It follows from Theorem \ref{thm.joingap} that $\Gap(\overline{G})\geq \frac{L}{32}$.
	Similarly, if $\overline{G}$ is disconnected, then the number of vertices in each component is more than $\delta(\overline{G})=n-1-\Delta(G)$, and applying Theorem \ref{thm.joingap} to $G$ yields $\Gap(G)\geq \frac{1-U}{32}$. Thus the theorem holds for the class of disconnected graphs and their complements.  Henceforth, then, we shall assume that both $G$ and $\overline{G}$ are connected.
	
	We shall require a large parameter $M$ and a small parameter $\epsilon$ which we define by
	\begin{equation}
	\label{eq.defMeps}
	M \; :=\;  \frac{8}{L(1-U)} 
	\hspace{5mm} \text{and} \hspace{5mm}
	\epsilon \; := \;  \frac{1}{M^2}  \;=\;   \frac{L^2(1-U)^2}{64} \,.
	\end{equation}
	We also let 
	\begin{equation}
	\label{eq.Gammadef}
	\Gamma \;\equiv\;   \Gamma_{L,U} \; :=\;  \frac{\epsilon^2}{2}  \;=\;   \frac{L^4(1-U)^4}{2^{13}}\,.
	\end{equation}
	We shall show that, for the graphs under consideration,
	\begin{verse}
		If $\Gap(G)   \,<\, \Gamma$, then $\Gap(\overline{G}) \,\geq\, \Gamma$.
	\end{verse}
	This will prove the theorem.
	
	Assume $n\geq \frac{2}{1-U}$, and
	let $G$ be a graph with $n$ vertices such that 
	$\Gap(G)<\Gamma$ and $Ln\leq \delta(G)\leq \Delta(G)\leq Un$.  Then
	\begin{equation}
	\label{eq.degbound1}   
	Ln   \;\leq \;  \deg_G(v)  \; \leq \;      Un  \hspace{10mm} \forall v\in V \,.
	\end{equation}
	For future reference, we also note that
	\begin{equation}
	\label{eq.volSvol}
	\frac{ {\rm vol}(T)}{Un}  \;\leq \;  |T|  \;\leq \;  \frac{{\rm vol}(T)}{Ln}   \hspace{10mm} \forall \,T\subseteq V.
	\end{equation} 
	In the complement $\overline{G}$, we have $n-1-Un \leq \delta(\overline{G})\leq \Delta(\overline{G})\leq n-1-Ln  < (1-L)n$.
	Since $1\leq \,\frac{(1-U)n}{2}$, we have that
	\begin{equation}
	\label{eq.degbound1c}   
	\frac{(1-U)\,n}{2}   \;\leq \;  \deg_{\overline{G}}(v)  \; \leq \;      (1-L)\,n  \hspace{10mm} \forall \,v\in V \,.
	\end{equation}

	Here is an outline of the proof.  The first three steps are similar to those in the proof of an analogous result for Kemeny's constant in \cite{kim2023bounds}.
	
	\smallskip
	\noindent
	\textit{\underline{Step 1}:}
	Since $\Gap(G)$ is small, the bottleneck ratio of $G$ must be small.  This means
	that there is a set $S$ of vertices such that there are relatively few edges in the cut $[S,S^c]_G$.
	Moreover, we show that the sets $S$ and $S^c$ each have at least order $n$ vertices.
	The idea then is that the corresponding complementary cut 
	$[S,S^c]_{\overline{G}}$ contains most of the $|S|\,|S^c|$ possible edges between 
	$S$ and $S^c$. 
	
	\smallskip
	\noindent
	\textit{\underline{Step 2}:}  We specify two sets of vertices $A\subseteq S$ and $B\subseteq S^c$ such that every vertex in $A$
	is connected (in $\overline{G}$) to most of $S^c$, every vertex in $B$ is connected to most of $S$, and
	the set differences $S-A$ and $S^c-B$ are both relatively small.
	
	\smallskip
	\noindent
	\textit{\underline{Step 3}:} 
	We now look at properties of the random walk on $\overline{G}$.  
	We prove that there is a $\Theta>0$ (depending on $L$ and $U$ but not $n$) such that 
	from every vertex of $A$ (respectively, $B$), the probability of entering $B$ (respectively, $A$) on the 
	next step is at least $\Theta$.  We also prove that from any vertex, the probability that the next 
	step is to $A\cup B$ is not small (in fact, at least $1/2$).  
	
	\smallskip
	\noindent  
	\textit{\underline{Step 4}:}
	With little loss of generality, assume $|S|\geq |S^c|$.
	We use the Type 1 coupling for $\overline{G}$ with respect to $B\times B$ together with Theorem~\ref{Thm:lower bound for the gap}. Step 3 gives a constant lower bound on the probability that the two walks are reasonably likely to both be in $A\cup B$ after one step, and both in $B$ after a second step.
	This enables us to apply Theorem \ref{Thm:lower bound for the gap}.
	
	
	\medskip
	Now we return to the proof.

	\medskip
	\textit{\underline{Step 1}.}
	Since $\Gap(G)  \,<\, \Gamma$, it follows from Equations (\ref{eq.jerrum}) 
	and (\ref{eq.Gammadef}) that
	$\Phi(G) \; <  \;   \epsilon$.   
	Therefore, by the definition of $\Phi(G)$ in Equation (\ref{eq.defPhi}), there is a nonempty subset $S$ of $V$
	such that ${\rm vol}(S)\leq |E(G)|$ and 
	\begin{equation}
	\label{eq.SPhibeta}
	|\,[S,S^c]_G|   \;< \;  \epsilon \,{\rm{vol}}(S) \;\leq \;  \epsilon \,n\,U\,|S| 
	\end{equation}
	(the second inequality uses Equation (\ref{eq.volSvol})).  Also, we have
	\begin{equation*}
	|S^c|\,Un   \;\geq \;  {\rm vol}(S^c)   \;=\; 
	2|E(G)|-{\rm vol}(S)  \;\geq \;|E(G)| \;\geq \; \frac{L\,n^2}{2},
	\end{equation*}
	and hence
	\begin{equation}
	\label{eq.2UScn}
	\frac{2U}{L}\, |S^c| \;\geq \; n \,.
	\end{equation}
	
	Let $C\,=\,  (L-\epsilon U)/2$.   
	Since $\epsilon<L$ by  Equation (\ref{eq.defMeps}), we see that
	\begin{equation}
	\label{eq.Cbound}
	\frac{L}{2} \;>\;   C  \; >  \;  \frac{L(1-U)}{2}    \;> \; 0.
	\end{equation}
	We claim that $|S|\,>\,n C$.   If not, then every vertex $v$ in $S$ has at most 
	$nC$ neighbours in $S$ (in $G$), and hence 
	\[
	|\,[v,S^c]_G| \;  \geq    \; Ln \,-\, Cn \;> \;   \epsilon U n \,.  
	\]
	Summing the above inequality over all $v\in S$ gives $|\,[S,S^c]_G|  \,> \, \epsilon U n \,|S|$, which 
	contradicts Equation (\ref{eq.SPhibeta}).  This proves the claim that
	\begin{equation}
	\label{eq.SgtnC}
	|S| \;>\;  nC  \,  \hspace{5mm}\text{where }C\;=\;  \frac{L-\epsilon U}{2}.
	\end{equation}
	

	\medskip
	
	\textit{\underline{Step 2}.}
	Note that $M\epsilon \,=\, L(1-U)/8\,<\,1/8$ and hence
	\begin{equation}
	\label{eq.Mepsbd}
	1-M\epsilon   \;>  \;  \frac{7}{8}  \,.
	\end{equation}
	Define the sets of vertices $A\subseteq S$ and $B\subseteq S^c$ by
	\begin{eqnarray}
	\label{eq.Adef1}
	A  & = &  \{v\in S\,:\, |\,[v,S^c]_{\overline{G}}| \,\geq \, |S^c|(1-M\epsilon)    \,\}
	\\
	\label{eq.Adef2}
	& = &  \{v\in S\,:\, |\,[v,S^c]_{G}| \,\leq\, M\epsilon \, |S^c|   \, \} \, , \hspace{7mm} \text{and}
	\\
	\label{eq.Bdef1}
	B  & = &  \{w\in S^c\,:\, |\,[w,S]_{\overline{G}}| \,\geq \, |S|(1-M\epsilon)    \,\}
	\\
	\label{eq.Bdef2}
	& = &  \{w\in S^c\,:\, |\,[w,S]_{G}| \,\leq\, M\epsilon \, |S|   \, \}   \,.
	\end{eqnarray}
	From Equation (\ref{eq.Adef2}), it follows that 
	\[       |\,[S,S^c]_G| \;\geq \;  \sum_{v\in S-A}|\,[v,S^c]_G|    \;\;\geq \;\;  |S-A|\times M\epsilon |S^c|
	\]
	and hence (using Equations (\ref{eq.SPhibeta}) and (\ref{eq.volSvol}), 
	and the bounds ${\rm vol}(S)\leq |E(G)|\leq {\rm vol}(S^c)$)  that 
	\begin{equation}
	\label{eq.SminusAbound}
	|S-A|\;\leq \;   \frac{  |\,[S,S^c]_G|}{M\epsilon \,|S^c|}  \;\leq \; \frac{\epsilon \, {\rm vol}(S)}{M\epsilon \,({\rm vol}(S^c)/Un)}  
	\;\leq \;  \frac{nU}{M}\,.
	\end{equation} 
	Similarly, we have     
	\[       |\,[S,S^c]_G| \;\geq \;  \sum_{w\in S^c-B}|\,[w,S]_G|    \;\;\geq \; \;  |S^c-B|\times M\epsilon |S|
	\]
	and hence (using Equation (\ref{eq.SPhibeta}))   that 
	\begin{equation}
	\label{eq.ScminusBbound}
	|S^c-B|\;\leq \;   \frac{  |\,[S,S^c]_G|}{M\epsilon \,|S|}  \;\leq \;
	\frac{nU}{M}   \,.
	\end{equation} 
	By Equations  (\ref{eq.SminusAbound}), (\ref{eq.SgtnC}),  (\ref{eq.ScminusBbound}), and (\ref{eq.2UScn}),  we have
	\begin{eqnarray}
	\nonumber 
	|A|  &\geq &  |S|\,-\, n\frac{U}{M}  \;\; \geq \; n\left( C -  \frac{U}{M}  \right)   \quad\text{ and}
	\\
	\nonumber 
	|B|  &\geq &  |S^c|\,-\, n\frac{U}{M}  \;\; \geq \; n\left(  \frac{L}{2U} -  \frac{U}{M}  \right) \,.  
	\end{eqnarray}
	We note the above two lower bounds are strictly positive because
	\begin{equation}
	\label{eq.U2MLCbound}
	\frac{U}{M}\;=\;  \frac{UL(1-U)}{8}   \;<\;  \frac{L(1-U)}{8}  \; <\;  \frac{C}{4} \;<\; \frac{L}{8U} \hspace{6mm}\text{by Eq.\ (\ref{eq.Cbound})}.
	\end{equation}

	

	
	By Equation (\ref{eq.Mepsbd}), the definition of $A$ says that 
	$|[v,S^c]_{\overline{G}}| \,\geq \,  \frac{7}{8}\,|S^c|$ for every $v\in A$, and hence 
	\begin{equation}
	\label{eq.vvinA}
	\left|  N_{\overline{G}}(v_1)\cap N_{\overline{G}}(v_2)\cap S^c\right|   \;\geq \;  \frac{3}{4}\, |S^c|   \hspace{5mm}\mbox{for all }v_1,v_2\in A.
	\end{equation}
	Similarly, we have
	\begin{equation}
	\label{eq.wwinB}
	\left|  N_{\overline{G}}(w_1)\cap N_{\overline{G}}(w_2)\cap S\right|   \;\geq \;  \frac{3}{4}\, |S|   \hspace{5mm}\mbox{for all }w_1,w_2\in B.
	\end{equation}

	\medskip
	
	\textit{\underline{Step 3}.}
	In Step 3, we consider the random walk on the complement of $G$, i.e.\ having the transition probabilities $P_{\overline{G}}(v,w)=1/\deg_{\overline{G}}(v)$ if $\{v,w\}\in E(\overline{G})$.
	
	For $w\in B$, we have
	\begin{eqnarray}
	\nonumber
	P_{\overline{G}}(w,A)    & = &   |\,[w,A]_{\overline{G}}| \, / \, \deg_{\overline{G}}(w)
	\\
	\nonumber
	& \geq & \frac{ |\,[w,S]_{\overline{G}}| \,-\, |S-A| }{n(1-L)}  
	\hspace{15mm}\text{(by Eq.\  (\ref{eq.degbound1c}))}      
	\\
	\nonumber
	& \geq & \frac{ |S|(1-M\epsilon) \,-\,n\frac{U}{M}  }{n(1-L)}   
	\hspace{12mm}\text{(by  Eqs.\ (\ref{eq.Bdef1}) and  (\ref{eq.SminusAbound}))}
	\\
	\nonumber
	& \geq & \frac{ nC(1-M\epsilon) \,-\,n\frac{U}{M}  }{n(1-L)}   
	\hspace{12mm}\text{(by  Eq.\  (\ref{eq.SgtnC}))}    
	\\
	\label{eq.PwAbound}
	& = & \frac{ C(1-M\epsilon) \,-\,\frac{U}{M}  }{1-L} \quad  =:\;  \Theta. 
	\end{eqnarray}
	We know from  Equations (\ref{eq.Mepsbd}), (\ref{eq.U2MLCbound}), and (\ref{eq.Cbound}) that
	\begin{equation}
	\label{eq.ThetaCLU0}
	\Theta \;>\; \frac{7}{8}C-\frac{C}{4}  \;>\; \frac{C}{2} \;>\; \frac{L(1-U)}{4} \;>\; 0.
	\end{equation}
	Similarly, for $v\in A$,
	\begin{eqnarray}
	\nonumber
	P_{\overline{G}}(v,B)    & = &   |\,[v,B]_{\overline{G}}| \, / \, \deg_{\overline{G}}(v)
	\\
	\nonumber
	& \geq & \frac{ |\,[v,S^c]_{\overline{G}}| \,-\, |S^c-B| }{n(1-L)}   
	\hspace{15mm}\text{(by Eq.\  (\ref{eq.degbound1c}))}
	\\
	\nonumber
	& \geq & \frac{ |S^c|(1-M\epsilon) \,-\,n\frac{U}{M}  }{n(1-L)}   
	\hspace{15mm}\text{(by Eqs.\ (\ref{eq.Adef1}) and  (\ref{eq.ScminusBbound}))}
	\\
	\nonumber
	& \geq & \frac{ \frac{nL}{2U}(1-M\epsilon) \,-\,n\frac{U}{M}  }{n(1-L)}   
	\hspace{15mm}\text{(by Eq.\ (\ref{eq.2UScn}))}         
	\\
	\nonumber
	& \geq  & \frac{ C(1-M\epsilon) \,-\,\frac{U}{M}  }{1-L}   \hspace{15mm}\text{(by Eq.\ (\ref{eq.U2MLCbound}))}
	\\
	\label{eq.PvBbound}
	& = &  \; \Theta.
	\end{eqnarray}

	For every $z\in V$,   
	\begin{eqnarray}
	\nonumber
	P_{\overline{G}}(z,(A\cup B)^c)   & \leq &   (|S-A|\,+\,|S^c-B|) \,/\, \deg_{\overline{G}}(z)
	\\
	\nonumber
	& \leq & \frac{ 2nU/M }{n(1-U)/2} 
	\hspace{10mm}\text{(by Eqs.\ (\ref{eq.SminusAbound}), (\ref{eq.ScminusBbound}),
		and (\ref{eq.degbound1c}))}
	\\
	\nonumber   
	& = &   \frac{4U}{M(1-U)} \,.
	\end{eqnarray}
	Therefore
	\begin{equation}
	\label{eq.PzABbound}
	P_{\overline{G}}(z, A\cup B)   \;\geq \;     1\,-\,    \frac{4U}{M(1-U)}   \;>\;  \frac{1}{2}  \hspace{9mm}
	\text{for every vertex $z$}
	\end{equation}
	(the final inequality holds because $\frac{4U}{M(1-U)} \,=\, \frac{UL}{2}\,<\,\frac{1}{2}$).

	\medskip
	
	\textit{\underline{Step 4}.} First, suppose that $|S|\geq n/2$.  We now consider the Type 1 coupling $\{(\overline{X}_t^{(1)},\overline{X}_t^{(2)})\}_{t\geq 0}$ for $\overline{G}$ with respect to $B\times B$. 
	If the chain enters $D_0$, then it stays in $D_0$.  Otherwise, if $\overline{X}_t^{(i)}\in B$, then the probability that $\overline{X}^{(i)}_{t+1}$ stays in $B$ is at least $1/2$ (by the lazy chain property); if $\overline{X}_t^{(i)}\in A$, then the probability that $\overline{X}^{(i)}_{t+1}\,\in\,B$ is at least $\frac{\Theta}{2}$ by Eq.~\eqref{eq.PvBbound}; and if $\overline{X}_t^{(i)}\notin A\cup B$, then the probability that $\overline{X}^{(i)}_{t+2}\,\in\,B$ is at least $\left(\frac{1}{2}\Theta\right)\left(\frac{1}{2}\times\frac{1}{2}\right)\,=\,\frac{\Theta}{8}$ by Eqs.~\eqref{eq.PvBbound} and~\eqref{eq.PzABbound}. Hence
	\begin{align}
	\label{eq.PBBTheta}
	\Pr\left(\left. \left(\overline{X}_{t+2}^{(1)},\overline{X}_{t+2}^{(2)}\right)\in (B{\times} B)\cup D_0 \,\right|\,   \left(\overline{X}_{t}^{(1)},\overline{X}_{t}^{(2)}\right) = (x_1,x_2)\right)\;\geq\; \frac{\Theta^2}{64}  \hspace{5mm}\hbox{for all $(x_1,x_2)\not\in B\times B$}.
	\end{align}
	We shall now apply Theorem~\ref{Thm:lower bound for the gap}.  By Equation \eqref{eq.wwinB}, any two vertices in $B$ have at least $\frac{3}{4}|S|$ common neighbours in $\overline{G}$, so that $C^*\geq \frac{3}{4}|S|$.
	Combining this with  Equations (\ref{eq.PBBTheta}) and (\ref{eq.ThetaCLU0}), we obtain
	$$\Gap(\overline{G}) \;\geq \;\frac{3|S|}{4}\frac{\Theta^2}{64}\frac{1}{3(n-1)}\;\geq\; \frac{\Theta^2}{2^9}\;\geq\; \frac{L^2(1-U)^2}{2^{13}}\;\geq\; \Gamma.$$

	Alternatively, suppose that $|S^c|\geq n/2$. Then the argument and the bounds are exactly the same as in the case $|S|\geq n/2$, with the roles of $S$ and $S^c$ interchanged, and $A$ and $B$ interchanged.  
	
	Thus the  theorem is proved. \hfill  $\Box$
	
	\section{Example: $K_{m,m}$ with an attached lollipop}\label{sec6}
	
	In this section, we shall provide a family of graphs that disproves Conjecture~\ref{conjecture2}.
	
	The \textit{complete bipartite graph}, denoted $K_{s_1,s_2}$, is the join of two empty graphs of order $s_1$ and $s_2$. 
	The \textit{lollipop graph}, denoted $L_{r,r}$, is the graph formed by taking the clique $K_r$ of size $r$ and the path $P_r$ on $r$ vertices, and joining a vertex of the clique and an end-vertex of the path by an edge. Let $G_{r,m}$ 
	be the graph obtained from $L_{r,r}$ and $K_{m,m}$ by joining the vertex of degree $1$ in $L_{r,r}$ and a vertex of $K_{m,m}$ by an edge; see Figure~\ref{fig:graph G}.

	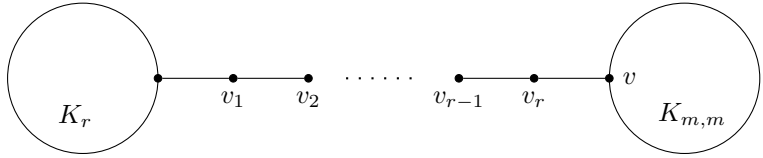
\begin{figure}[h!]
		\centering
		\begin{tikzpicture}
		\tikzset{enclosed/.style={draw, circle, inner sep=0pt, minimum size=.10cm, fill=black}}
		
		\node[enclosed,label={below, : }] (v0) at (-3,0) {};
		\node[enclosed, label={below,  : $v_1$}] (v1) at (-2,0) {};
		\node[enclosed, label={below, : $v_2$}] (v2) at (-1,0) {};
		\node[enclosed, label={below,  : $v_{r-1}$}] (vr-1) at (1,0) {};
		\node[enclosed, label={below,  : $v_r$}] (vr) at (2,0) {};
		\node[enclosed, label={right,  : $v$}] (v) at (3,0) {};

		\draw (-4,0) circle (1cm);
		\draw[thick, loosely dotted] (-.5,0)--(.5,0);
		\draw (4,0) circle (1cm);

		\draw (v0) -- (v1); \draw (v1)--(v2);
		\draw (vr-1) -- (vr); \draw (vr)--(v);
		
		\node[] at (-4.1,-0.5) {$K_r$};
		\node[] at (4.1,-0.5) {$K_{m,m}$};
		\end{tikzpicture}
		\caption{An illustration of the graph $G_{r,m}$ in Lemma~\ref{lem:graph G}.}
		\label{fig:graph G}
	\end{figure}
	
	We introduce a useful tool to find an upper bound on the spectral gap of $G$. Recall that the stationary distribution of
	the random walk on $G$ is given by $\pi(v)=\frac{\deg(v)}{2|E(G)|}$. It appears in \cite[Section~3.6]{aldous2002reversible} that
	\begin{align}\label{eqn: gap}
	\Gap(G) =&\; \inf_{F\in S}\frac{\sum_i\sum_{j\neq i}\pi(i)P(i,j)(F(i)-F(j))^2}{2\sum_{i}\pi(i)(F(i))^2} = \inf_{F\in S}\frac{\sum_{\{i,j\}\in E(G)}(F(i)-F(j))^2}{2\sum_{i}\deg(i)(F(i))^2}
	\end{align}
	where $S = \{F:V\rightarrow \mathbb{R}\mid \sum_{i\in V(G)}\pi(i)F(i) = 0\}$.
	
	\begin{lemma}\label{lem:graph G}
		Let $G_{r,m}$ be the graph as defined above. Suppose that $m\geq 2r$. Then
		$$\Gap(G_{r,m}) <\frac{4}{r^3}.$$
	\end{lemma}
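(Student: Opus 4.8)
The plan is to use the variational (Rayleigh–quotient) characterization \eqref{eqn: gap}: it suffices to exhibit a single test function $F:V\to\mathbb{R}$ with $\sum_{i}\deg(i)F(i)=0$ (so that $F\in S$) whose Rayleigh quotient is below $4/r^{3}$. The guiding intuition is that the path $v_{1}-v_{2}-\cdots-v_{r}$ is a thin bottleneck separating the dense piece $K_{r}$ from the dense piece $K_{m,m}$, so $F$ should be constant on each of the two dense pieces and interpolate linearly across the path. Concretely I would take $F\equiv\alpha$ on all of $K_{r}$, $F\equiv\beta$ on all of $K_{m,m}$, and let $F$ drop by equal increments $\delta=(\alpha-\beta)/(r+1)$ along the $r+1$ ``bridge'' edges $v_{0}v_{1},v_{1}v_{2},\dots,v_{r-1}v_{r},v_{r}v$ (here $v_{0}\in K_{r}$ is the clique vertex attached to the path and $v\in K_{m,m}$, as in Figure~\ref{fig:graph G}), i.e.\ $F(v_{j})=\alpha-j\delta$; note this is consistent with $F\equiv\alpha$ on $K_{r}$ and $F\equiv\beta$ on $K_{m,m}$ because $F(v_{0})=\alpha$ and $F(v)=\beta$.

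With this choice the numerator of the Rayleigh quotient is immediate: edges inside $K_{r}$ and inside $K_{m,m}$ contribute nothing, and each of the $r+1$ bridge edges contributes $\delta^{2}$, so the numerator equals $(r+1)\delta^{2}=(\alpha-\beta)^{2}/(r+1)$. Next I would impose orthogonality. Using $\deg(v_{0})=r$, $\deg(v_{i})=2$ for $i=1,\dots,r$, $\deg(v)=m+1$, and the remaining clique and $K_{m,m}$ degrees, a short computation collapses $\sum_{i}\deg(i)F(i)=0$ to the single linear relation $\alpha(r^{2}+1)+\beta(2m^{2}+r+1)=0$, which I would solve by setting $\alpha=2m^{2}+r+1$ and $\beta=-(r^{2}+1)$, so that $\alpha-\beta=2m^{2}+r^{2}+r+2$. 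For the denominator I only need a lower bound, and keeping just the contribution of $K_{r}$ (whose total degree is $r^{2}-r+1$) gives $2\sum_{i}\deg(i)F(i)^{2}\ge 2\alpha^{2}(r^{2}-r+1)$.

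Combining these estimates yields
\[
   \Gap(G_{r,m})\;\le\;\frac{(\alpha-\beta)^{2}}{2(r+1)(r^{2}-r+1)\,\alpha^{2}}\;=\;\frac{(\alpha-\beta)^{2}}{2(r^{3}+1)\,\alpha^{2}},
\]
where I used the factorization $(r+1)(r^{2}-r+1)=r^{3}+1$. Finally, since $m\ge 2r$ we have $2m^{2}\ge 8r^{2}\ge 4(r^{2}+1)$, hence $\dfrac{\alpha-\beta}{\alpha}=1+\dfrac{r^{2}+1}{2m^{2}+r+1}\le\dfrac{5}{4}$, and therefore $\Gap(G_{r,m})\le\dfrac{25}{32(r^{3}+1)}<\dfrac{4}{r^{3}}$, which is even stronger than claimed.

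The routine bookkeeping (pinning down the exact degree sequence and simplifying the orthogonality sum) is the only place a sign or arithmetic slip could creep in, but it is not the conceptual difficulty. The genuine point — what I expect to be the crux — is twofold: first, one must spread the drop of $F$ uniformly over the full length-$(r+1)$ bridge rather than across a single cut edge, since a single cut only produces a bound of order $r^{-2}$; and second, one must notice that the orthogonality constraint forces $|\alpha|$ to be of order $m^{2}$ and $|\beta|$ to be of order $r^{2}$, so the hypothesis $m\ge 2r$ is exactly what keeps the ratio $(\alpha-\beta)/\alpha$ bounded and hence preserves the gain coming from the $(r^{2}-r+1)$ factor in the denominator.
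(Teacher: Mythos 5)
Your proposal is correct and follows essentially the same route as the paper: the same piecewise constant/linearly interpolating test function (yours is just a scalar multiple of the paper's choice $a=1$, $b=(r^2+1)/(2m^2+r+1)$) plugged into the variational formula \eqref{eqn: gap}, with the identical orthogonality relation $\alpha(r^2+1)+\beta(2m^2+r+1)=0$. The only difference is bookkeeping: you keep the exact factor $(r+1)$ in the numerator and retain only the $K_r$ term in the denominator, while the paper is cruder in the numerator and more careful in the denominator; both give the claimed bound $4/r^3$ (yours slightly more strongly).
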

	\begin{proof}	
		We regard $K_r,P_r,K_{m,m}$ as the subgraphs of $G_{r,m}$. For $k=1,\ldots,r$, let $v_k$ be the vertex of $P_r$ whose distance from $K_r$ is $k$. 
		Set $a = 1$ and $b = (r^2+1)/(2m^2+r+1)$. Define the function $F:V\rightarrow \mathbb{R}$ by 
		\[    F(x)    \;=\;  
		\begin{cases}
		-a,  & \text{if $x\in V(K_r)$;} \\
		-a + \frac{b+a}{r+1}k, & \text{if $x = v_k$ for $k=1,\dots,r$;}\\
		b, & \text{if $x\in V(K_{m,m})$.}
		\end{cases}
		\]
		It can be seen that $\sum_{i\in V(G)}\pi(i)F(i) = 0$. Note that $0<b<1$ and $a+b<2$. Then
		\begin{align*}
		\sum_{\{i,j\}\in E(G)}(F(i)-F(j))^2 \;=\; \frac{(b+a)^2}{r+1} 
		\;<\;\frac{4}{r}.
		\end{align*}
		Note that since $4r+2\geq 3r+3$, we have $(2r+1)/3(r+1)\geq 1/2$.
		Moreover,
		\begin{align*}
		&\sum_{i}\deg(i)(F(i))^2 \\
		=&\; a^2(r-1)^2+a^2r+\sum_{k=1}^r 2\left(-a+\frac{b+a}{r+1}k\right)^2 + b^2(m+1)+b^2m(2m-1)\\
		= &\; r^2-r+1+b^2\left(2m^2+1+\frac{r(2r+1)}{3(r+1)}\right)+b\left(-2r+\frac{2r(2r+1)}{3(r+1)}\right)+\frac{r(2r+1)}{3(r+1)}\\
		>&\;r^2-r+1+b\left(-2r+r\right)+\frac{r}{2} \\>&\;r^2-\frac{3}{2}r+1.
		\end{align*}
		Since $r^3-3r^2+2r\geq 0$ for every nonnegative integer $r$, by \eqref{eqn: gap}, we have \begin{equation*}
		\Gap(G)< \frac{4}{2r^3-3r^2+2r} =  \frac{4}{r^3+r^3-3r^2+2r}\leq \frac{4}{r^3}. \qedhere
		\end{equation*}
	\end{proof}
	\begin{remark}
		The technique in the proof above is a modified version of the one in the proof for the spectral gap of the barbell graph in \cite[Section~5]{aldous2002reversible}, which is the graph obtained from $G$ by replacing $K_{m,m}$ by $K_m$ where $m=r$. If we change the condition $m=r$ to $r/m\rightarrow 0$ as $n\rightarrow \infty$, then one can check that applying a similar argument, the spectral gap for the ``unbalanced'' barbell graph is bounded above by $1/r^3$ up to constant. 
	\end{remark}
	
	Consider $G=G_{r,m}$, with $n=2r+2m=|V(G)|$. 
	Let $v$ be the vertex of $K_{m,m}$ in $G$ that is adjacent to $P_r$. 
	The complement $\overline{G}$ contains two cliques of size $m$, which we shall call $A$ and $B$, where $A$ is the one containing $v$. Each of the vertices in the complement of $L_{r,r}$ (viewed as a subgraph of $\overline{G}$) is connected in $\overline{G}$ to every vertex in $B$. Then we see from \eqref{eq.jerrum} that
	\begin{equation*}
	\label{eq.Kmmbarupper}
	\Gap(\overline{G})  \;\leq \;   \frac{2[B,B^c]_{\overline{G}}}{\hbox{vol}_{\overline{G}}(B)}   \;=\; \frac{2(2r)m}{m(m-1+2r)} \;\leq \; \frac{4r}{m}\,.
	\end{equation*}
	
	For each $n$, let $G_n$ be the above graph with $r=r_n$ and $m=m_n$ chosen so that $n=2m_n+2r_n$ and $r_n \sim n^{1/4}$ as $n\rightarrow \infty$.
	Then $m_n\sim n/2$ and 
	\[    \max\{\Gap(G_n),\Gap(\overline{G}_n))\}    \;\leq \;   \max\left\{  \frac{2}{r_n^3},\frac{4r_n}{m_n}\right\}
	\;\sim\;      \max\left\{  \frac{2}{n^{3/4}},\frac{4n^{1/4}}{n/2}\right\}   \;=\;  \frac{8}{n^{3/4}}\,  \hspace{5mm}\hbox{as }n\rightarrow\infty.
	\]
	
	\section*{Acknowledgement}
	The authors are grateful to Ada Chan at York University for constructive conversations and to Mark Kempton and Adam Knudson at Brigham Young University for sharing their data and examples.
	
	\section*{Funding}
	Sooyeong Kim is supported in part by funding from the Fields Institute for Research in Mathematical Sciences and from the Natural Sciences and Engineering Research Council of Canada (NSERC). 
	Neal Madras is supported in part by NSERC Grant No.\ RGPIN-2020-06124.


\end{document}